\newtheorem{theorem}{Theorem}
\newtheorem{example}{Example}%
\newtheorem{remark}{Remark}%
\newtheorem{lemma}{Lemma}%
\newtheorem{definition}{Definition}%
\def\@email#1#2{%
	\endgroup
	\patchcmd{\titleblock@produce}
	{\frontmatter@RRAPformat}
	{\frontmatter@RRAPformat{\produce@RRAP{*#1\href{mailto:#2}{#2}}}\frontmatter@RRAPformat}
	{}{}
}%
\begin{document}
	
	\preprint{AIP}
	
	\title[Approximating the Symplectic Spectrum]{A Note on Approximating the Symplectic Spectrum}
	\author{V. B. Kiran Kumar}
	
	
	\affiliation{ 
		Department of Mathematics, Cochin University of Science and Technology, Kochi 682022
	}%
	
	\author{Anmary Tonny}%
	\email{anmarytonny97@gmail.com}
	\affiliation{%
		Department of Mathematics, Cochin University of Science and Technology, Kochi 682022
	}%
	
	\date{\today}
	
	\begin{abstract}    
		The symplectic eigenvalues play a significant role in finite mode quantum information theory, and Williamson's normal form proves to be a valuable tool in this area. Understanding the symplectic spectrum of a Gaussian Covariance Operator is a crucial task. Recently, in 2018, an infinite-dimensional analogue of Williamson’s Normal form was discovered, which has been instrumental in studying infinite mode Gaussian quantum states. However, most existing results pertain to finite-dimensional operators, leaving a dearth of literature in the infinite-dimensional context. The focus of this article is on employing approximation techniques to estimate the symplectic spectrum of certain infinite-dimensional operators. These techniques are well-suited for a particular class of operators, including specific types of infinite mode Gaussian Covariance Operators. Our approach involves computing the Williamson’s Normal form and deriving bounds for the symplectic spectrum of these operators. As a practical application, we explicitly compute the symplectic spectrum of Gaussian Covariance Operators. Through this research, we aim to contribute to the understanding of symplectic eigenvalues in the context of infinite-dimensional operators, opening new avenues for exploration in quantum information theory and related fields.
	\end{abstract}
	
	\maketitle

	\section{Introduction}

	In the finite mode quantum information theory,  the Williamson normal form is a mathematical tool used in the study of quantum systems, particularly in the field of quantum optics. It is named after John Williamson, who introduced it in $1936.$ The Williamson normal form provides a canonical representation of the covariance matrix of a Gaussian quantum state. A Gaussian quantum state is a type of quantum state that can be completely characterized by its first and second moments, which are related to its mean values and covariance matrix, respectively. The covariance matrix of a Gaussian quantum state describes the statistical correlations between pairs of observables in the state. The Williamson normal form allows us to diagonalize the covariance matrix, which means that it transforms it into a diagonal matrix. This diagonal form simplifies the analysis and calculation of properties of the quantum state. It reveals important information about the state, such as the squeezing properties and the uncertainties associated with different observables.	The symplectic eigenvalues of the covariance matrix provide important information about the quantum state under consideration. 	The connection between quantum information theory and symplectic eigenvalues arises through the study of covariance matrices associated with quantum states. The symplectic eigenvalues of these matrices provide valuable insights into the properties and capabilities of quantum systems, particularly those involving continuous variables.

	Infinite-mode quantum information theory provides tools and techniques for analyzing and manipulating quantum states in infinite-dimensional systems. It extends concepts such as quantum entanglement, quantum channels, and quantum measurements to the infinite-dimensional setting. This allows for a deeper understanding of the fundamental properties of quantum systems and opens up new possibilities for information processing and communication. In infinite-mode quantum information theory, one considers systems with infinite-dimensional Hilbert spaces. This framework is particularly relevant for studying quantum information processing in continuous-variable systems, where observables such as position and momentum are continuous rather than discrete. Examples of continuous-variable systems include quantum optics, where the state of light can be described using continuous variables such as the amplitude and phase of the electromagnetic field.

	Approximating the infinite-dimensional symplectic spectrum of a positive invertible operator can be a challenging task. However, there are some numerical methods and techniques that can be employed to obtain approximate solutions. One general approach is to express the positive invertible operator as a matrix or a set of matrices within the chosen finite-dimensional subspace,  use numerical techniques to compute the eigenvalues of the matrix representation, and once you have obtained the eigenvalues, analyze them to determine the approximate symplectic spectrum. It is important to note that the accuracy of the approximation will depend on the specific problem and the techniques used. Additionally, the choice of the finite-dimensional subspace and numerical methods will also affect the quality of the approximation.

	In this article, we consider this approximation problem of  symplectic spectrum for some special class of  positive invertible operators acting on infinite-dimensional Hilbert spaces. Recall the Williamson's normal form invented by  John Williamson in $1936.$ 
	
	\begin{theorem} (Williamson's normal form \cite{jw01})  \label{wnffd}
		Let $T \in M_{2n}(\mathbb{R})$ be a strictly positive matrix ($T$ is symmetric and has positive eigenvalues). Then there exist a symplectic matrix $L$ of order $2n \times 2n$ and a strictly positive diagonal matrix D of order $n \times n$ such that
		$$
		T = L^T
		\begin{bmatrix}
			D & 0 \\
			0 & D
		\end{bmatrix}L.
		$$
		Furthermore, D is unique upto the order of its entries.		
	\end{theorem}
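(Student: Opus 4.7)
The plan is to reduce the statement to the classical real normal form for skew-symmetric matrices. Since $T$ is strictly positive, the square root $T^{1/2}$ exists and is itself strictly positive. I would form the matrix $A = T^{1/2} J T^{1/2}$, where $J = \bigl(\begin{smallmatrix} 0 & I_n \\ -I_n & 0 \end{smallmatrix}\bigr)$ denotes the standard symplectic form. Because $J^T = -J$, one checks immediately that $A^T = -A$, and because both $T^{1/2}$ and $J$ are invertible, so is $A$. The strategy is then to diagonalize $A$ orthogonally and use this diagonalization to manufacture the required symplectic $L$.

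The real spectral theorem for nonsingular skew-symmetric matrices supplies an orthogonal $O \in M_{2n}(\mathbb{R})$ and positive numbers $d_1, \ldots, d_n$ such that
\[
O^T A O = \begin{bmatrix} 0 & D \\ -D & 0 \end{bmatrix}, \qquad D = \mathrm{diag}(d_1, \ldots, d_n).
\]
I would then set
\[
S = T^{-1/2}\, O \begin{bmatrix} D^{1/2} & 0 \\ 0 & D^{1/2} \end{bmatrix}
\]
and take $L = S^{-1}$. The two verifications needed are (i) $S$ is symplectic, i.e.\ $S^T J S = J$, and (ii) $S^T T S = \mathrm{diag}(D,D)$. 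For (ii) the two $T^{-1/2}$ factors collapse $T$ to the identity, leaving $\mathrm{diag}(D^{1/2},D^{1/2})\,O^T O\,\mathrm{diag}(D^{1/2},D^{1/2}) = \mathrm{diag}(D,D)$. For (i) one rewrites the normal form as $A = O\,\mathrm{diag}(D^{1/2},D^{1/2})\,J\,\mathrm{diag}(D^{1/2},D^{1/2})\,O^T$, substitutes $A = T^{1/2}JT^{1/2}$, and solves for $J$, which produces exactly $J = S J S^T$; the symplectic identity then follows since $MJM^T=J$ is equivalent to $M^TJM=J$ for invertible $M$. Passing to $L = S^{-1}$ yields $T = L^T\,\mathrm{diag}(D,D)\,L$, and $L$ is symplectic because the set of symplectic matrices is a group.

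For uniqueness, I would observe that conjugation by $L$ turns $iJT = iJL^T \mathrm{diag}(D,D) L$ into $i J \,\mathrm{diag}(D,D) = \bigl(\begin{smallmatrix} 0 & iD \\ -iD & 0 \end{smallmatrix}\bigr)$, which is Hermitian with spectrum $\{\pm d_1, \ldots, \pm d_n\}$. Since similarity preserves spectrum, the multiset of $d_k$'s is intrinsic to $T$, proving uniqueness of the diagonal entries of $D$ up to their order.

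The main obstacle is the structural bookkeeping in step (i): one must keep track of the symplectic condition throughout, and in particular verify that the specific arrangement of $O$ delivered by the skew-symmetric normal form produces the form $\bigl(\begin{smallmatrix} 0 & D \\ -D & 0 \end{smallmatrix}\bigr)$ rather than a block-diagonal arrangement of $2\times 2$ blocks. This is a permutation (hence orthogonal) conjugation, but it has to be absorbed into $O$ before the computation goes through cleanly. Everything else reduces to the well-known skew-symmetric normal form plus algebraic manipulation, so once this step is handled carefully the remainder is mechanical.
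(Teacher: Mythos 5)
The paper does not prove this theorem; it cites it to Williamson's 1936 paper and moves on, so there is no in-paper argument to compare against. Your proof is the standard modern derivation via the real normal form of the skew-symmetric matrix $A = T^{1/2}JT^{1/2}$, and it is correct. In brief: $A$ is skew-symmetric and nonsingular because $T^{1/2}$ and $J$ are, so its normal form has strictly positive $d_k$'s; the collapse $S^T T S = \mathrm{diag}(D,D)$ is immediate from $O^TO = I$; conjugating the normal form $A = O\,\mathrm{diag}(D^{1/2},D^{1/2})\,J\,\mathrm{diag}(D^{1/2},D^{1/2})\,O^T$ on both sides by $T^{-1/2}$ gives $J = SJS^T$, and the equivalence of $MJM^T = J$ with $M^TJM = J$ (for invertible $M$, using $J^2 = -I$) shows $S$, and therefore $L = S^{-1}$, is symplectic. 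The uniqueness argument is also sound: $L^TJL = J$ implies $JL^T = L^{-1}J$, so $JT = L^{-1}\,J\,\mathrm{diag}(D,D)\,L$ is similar to $J\,\mathrm{diag}(D,D)$, and the Hermitian matrix $iJ\,\mathrm{diag}(D,D)$ has spectrum $\{\pm d_1,\ldots,\pm d_n\}$, making the multiset $\{d_k\}$ an invariant of $T$. The caveat you flag yourself is the only place the argument is not fully spelled out: the real spectral theorem for skew-symmetric matrices most naturally produces a direct sum of $2\times 2$ blocks $\bigl(\begin{smallmatrix}0 & d_k\\ -d_k & 0\end{smallmatrix}\bigr)$, and one must conjugate by the interleaving permutation (an orthogonal matrix) to obtain the single-block form $\bigl(\begin{smallmatrix}0 & D\\ -D & 0\end{smallmatrix}\bigr)$, absorbing this permutation into $O$. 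You have identified this correctly; writing that conjugation explicitly would close the only small gap, and the rest is, as you say, mechanical.
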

	\noindent The entries of the diagonal matrix $D$ are defined as the symplectic eigenvalues of the matrix $T$, denote this set by $\sigma_{sy} (T).$ In $2018,$ B. V. Rajarama Bhat and T. C. John extended the Williamson's normal form to the infinite-dimensional settings; for strictly positive invertible operators on  real separable Hilbert spaces \cite{bv01}. The non-commutative analogues of the classical Gaussian distributions called Gaussian quantum states were studied in \cite{bv02}. The strictly positive invertible operators occur as covariance operators of Gaussian quantum states. Also, the symplectic spectrum (which we will define below) is a complete invariant for the class of covariance operators associated with the Gaussian quantum  states. Before stating the infinite-dimensional Williamson's normal form we give some definitions. 
	
	\begin{definition}\cite{bv01}
		Let $\mathcal{H}$ be a real Hilbert space and $I$ be the identity operator on $\mathcal{H}$. The involution operator $J$ on $\mathcal{H} \oplus \mathcal{H}$ is given by $ J = 
		\begin{bmatrix}
			0 & I \\
			-I & 0
		\end{bmatrix}.
		$ Let $\mathcal{H}$ and $\mathcal{K}$ be two real Hilbert spaces. A bounded invertible linear operator $Q : \mathcal{H} \oplus \mathcal{H} \rightarrow \mathcal{K} \oplus \mathcal{K}$ is called a symplectic transformation if $Q^{T}JQ = J$, where $J$ on the left side is the involution operator on $\mathcal{K} \oplus \mathcal{K}$ and that on the right side is the involution operator on $\mathcal{H} \oplus \mathcal{H}$.
	\end{definition}
	
	\begin{remark} \label{symcompo}
		Let $\mathcal{H,K,R}$ be real Hilbert spaces, $L: \mathcal{H} \oplus \mathcal{H} \rightarrow \mathcal{K} \oplus \mathcal{K}$ and $M: \mathcal{K} \oplus \mathcal{K} \rightarrow \mathcal{R} \oplus \mathcal{R}$ be symplectic transformations. Then $ML: \mathcal{H} \oplus \mathcal{H} \rightarrow \mathcal{R} \oplus \mathcal{R}$ is a symplectic transformation. This can be seen as follows. 
		
		\begin{align*}
			(ML)^{T}J(ML) &= (L^TM^T)J(ML) \\
			&= L^T(M^TJM)L \\
			&= L^TJL = J.	
		\end{align*}
	\end{remark}
	
	\noindent Below we state the infinite-dimensional Williamson's normal form for strictly positive invertible operators on infinite-dimensional separable real Hilbert spaces.
	\begin{theorem} (Williamson's normal form \cite{bv01}) \label{wnfifd}
		Let $\mathcal{H}$ be a real separable Hilbert space and $T$ be a strictly positive invertible operator on $ \mathcal{H} \oplus \mathcal{H}$ ($T$ is self-adjoint and has its spectrum on the positive real axis). Then there exists a positive invertible operator $D$ on $\mathcal{H}$ and a symplectic transformation $M : \mathcal{H} \oplus \mathcal{H} \rightarrow \mathcal{H} \oplus \mathcal{H}$ such that 
		$$
		T = M^T
		\begin{bmatrix}
			D & 0 \\
			0 & D
		\end{bmatrix}M$$ 
		Further, $D$ is unique upto conjugation with an orthogonal transformation. 
	\end{theorem}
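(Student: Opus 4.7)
The plan is to adapt the classical proof of Williamson's theorem, replacing the explicit diagonalization of matrices with the spectral theorem for bounded self-adjoint operators. All information about the symplectic spectrum will be packaged in the operator
\[
A := T^{1/2} J T^{1/2},
\]
which is bounded, invertible, and (since $J^T = -J$) skew-adjoint. The first step is to perform its polar decomposition $A = K|A|$, where $|A| := (-A^2)^{1/2}$ is strictly positive and invertible. Since $A$ is normal ($A^T A = A A^T = -A^2$), the isometric factor $K$ is in fact orthogonal and commutes with $|A|$; from $A^T = -A$ together with the invertibility of $|A|$, one reads off $K^T = -K$, hence $K^2 = -I$. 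Thus $K$ is a complex structure on $\mathcal{H}\oplus\mathcal{H}$.

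The second step is to bring $K$ and $|A|$ simultaneously into standard form. Endow $\mathcal{H}\oplus\mathcal{H}$ with the complex Hilbert space structure determined by $K$, via $(a+ib)v := av + bKv$ and $\langle u,v\rangle_K := \langle u,v\rangle + i\langle u,Kv\rangle$; on this complex Hilbert space, $|A|$ is $\mathbb{C}$-linear, self-adjoint, and strictly positive. By the complex spectral theorem, $|A|$ is unitarily equivalent to multiplication by a positive bounded function on some $L^2(\mu)$, and pulling back pointwise complex conjugation on $L^2(\mu)$ produces a conjugation $C$ on $(\mathcal{H}\oplus\mathcal{H},K)$ that commutes with $|A|$. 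Its fixed-point subspace $\mathcal{H}' := \{v : Cv = v\}$ is a closed real Hilbert space, $\mathcal{H}\oplus\mathcal{H} = \mathcal{H}' \oplus K\mathcal{H}'$, and in this splitting $K$ becomes the standard $J$ while $|A|$ becomes block-diagonal $\mathrm{diag}(D,D)$, where $D$ is the restriction of $|A|$ to $\mathcal{H}'$. Since $\mathcal{H}'$ and $\mathcal{H}$ are separable real Hilbert spaces of the same dimension, one selects a real orthogonal identification between them, yielding an orthogonal operator $O : \mathcal{H}\oplus\mathcal{H} \to \mathcal{H}\oplus\mathcal{H}$ satisfying $O^T K O = J$ and $O^T |A| O = \mathrm{diag}(D,D)$.

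The third step is to extract the symplectic transformation. Set $D' := \mathrm{diag}(D,D)$ and $M := (D')^{-1/2} O^T T^{1/2}$. The identity $M^T D' M = T$ is immediate. For the symplectic identity $M^T J M = J$, observe that $A = K|A|$ rewrites as $T^{1/2} J T^{1/2} = O J D' O^T$; inverting both sides and using $J^{-1} = -J$ gives $T^{-1/2} J T^{-1/2} = O J (D')^{-1} O^T$, and substituting this into $M^T J M$ together with $[J,D'] = 0$ closes the identity. Uniqueness of $D$ up to orthogonal conjugation follows because any two valid choices correspond to two conjugations $C_1, C_2$ commuting with $|A|$, whose fixed-point subspaces are real-orthogonally isomorphic through a map intertwining the respective restrictions of $|A|$.

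The principal obstacle lies in the second step. In finite dimensions one simply chooses an orthonormal eigenbasis of $|A|$ adapted to the $K$-action, but in infinite dimensions $|A|$ may have purely continuous spectrum and possess no eigenvectors at all; one must therefore invoke the spectral theorem to realize $|A|$ as a multiplication operator, construct a compatible conjugation, and exploit the separability of $\mathcal{H}$ to identify the resulting real fixed-point subspace with $\mathcal{H}$ itself.
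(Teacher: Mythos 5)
The paper does not actually prove Theorem~\ref{wnfifd}; it is quoted verbatim from Bhat and John \cite{bv01} and used as a black box throughout. Your construction --- form $A=T^{1/2}JT^{1/2}$, observe it is skew-symmetric, normal and invertible, take the polar decomposition $A=K|A|$ with $K$ an orthogonal complex structure commuting with $|A|$, then simultaneously normalize $(K,|A|)$ to $(J,\mathrm{diag}(D,D))$ by an orthogonal $O$ and set $M=\mathrm{diag}(D,D)^{-1/2}O^TT^{1/2}$ --- is precisely the route taken in \cite{bv01}, modulo presentation: Bhat and John phrase the simultaneous block-diagonalization of $K$ and $|A|$ as a structure theorem for real skew-symmetric (more generally, real normal) operators, whereas you get it by viewing $(\mathcal{H}\oplus\mathcal{H},K)$ as a complex Hilbert space, realizing $|A|$ as a multiplication operator, and pulling back pointwise conjugation to a conjugation $C$ commuting with $|A|$ whose fixed subspace gives the required copy of $\mathcal{H}$. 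That idea is sound (one small bookkeeping point: with $(a+ib)v:=av+bKv$, the compatible sesquilinear form is $\langle u,v\rangle_K=\langle u,v\rangle - i\langle u,Kv\rangle$, not with a $+$). The existence argument and the verification $M^TJM=J$, $M^T\mathrm{diag}(D,D)M=T$ are correct.

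The uniqueness paragraph is the weak spot. You assert that two Williamson forms ``correspond to two conjugations $C_1,C_2$ commuting with $|A|$, whose fixed-point subspaces are real-orthogonally isomorphic through a map intertwining the restrictions of $|A|$.'' Nothing forces an arbitrary symplectic $M$ to arise from a conjugation in your construction, and the existence of the intertwining real-orthogonal isomorphism is exactly what has to be proved, not assumed. The robust argument is: for any symplectic $M$ with $T=M^T\mathrm{diag}(D,D)M$, the operator $O:=T^{1/2}M^{-1}\mathrm{diag}(D,D)^{-1/2}$ is orthogonal and conjugates $T^{1/2}JT^{1/2}$ to $\mathrm{diag}(D,D)J$, hence $|T^{1/2}JT^{1/2}|$ is orthogonally equivalent to $\mathrm{diag}(D,D)$; thus any two admissible $D_1,D_2$ satisfy $\mathrm{diag}(D_1,D_1)\cong_O\mathrm{diag}(D_2,D_2)$, and one then invokes a multiplicity/cancellation result (a real Hahn--Hellinger argument) to cancel the doubling and conclude $D_1\cong_O D_2$. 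That cancellation step is nontrivial in infinite dimensions and should be made explicit; this is in fact where most of the content of the uniqueness statement lives.
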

	
	\noindent The spectrum of $D$ is called the symplectic spectrum of $T$, denoted by $\sigma_{sy}(T).$

	In this article, we consider the problem of approximating  $\sigma_{sy}(T)$ and its bounds on some special class of operators through the well-known truncation method. That means we consider the finite-dimensional trunctions $T_{2n}$ of $T$, which can be treated as even order matrices and compute $\sigma_{sy}(T_{2n})$. We analyze the limiting behaviour of these sets and see their connections with $\sigma_{sy}(T)$. To be more precise, we see that for the class of operators under consideration, the truncation method successfully approximate $\sigma_{sy}(T)$. Also, we are able to compute the bounds for $\sigma_{sy}(T)$. The Williamson's normal form is explicitly computed in such cases. Later, we will derive conditions under which the operators in the above-mentioned classes become a Gaussian Covariance Operator (GCO). Also, we will see that our approximation results are applicable to such operators. Indeed we show that the symplectic spectrum of a GCO can be completely retrieved using the truncation method given in \cite{bcn01}, and hence find useful in the infinite-mode quantum information theory. We  illustrate our results with some numerical examples at the end.

	The two classes of operators  considered in this article, are listed below. Let $\mathcal{H}$ be a real separable Hilbert space, $T$ be a positive invertible operator on $\mathcal{H} \oplus \mathcal{H}$. 
	\begin{enumerate}
		\item \underline{Class $\mathbb{A}$}: This class consists of operators $T$ of the form $T = \begin{bmatrix}
			A & 0 \\
			0 & B
		\end{bmatrix},$ where $A, B$ are positive invertible operators on $\mathcal{H}$ such that $AB = BA$.
		\item \underline{Class $\mathbb{B}$}: This class consists of operators $T$ of the form $T = \begin{bmatrix}
			A & B \\
			B & A
		\end{bmatrix},$ where $A$, $B$ are self-adjoint operators on $\mathcal{H}$ such that the operators $A + B$ and $A - B$ are positive invertible and commuting.
	\end{enumerate}

	\section{Preliminary Results} \label{mainresults}
	In this section we develop two major tools required to achieve our main results. The finite section method, also known as the truncation method, is a popular idea in the spectral approximation literature. It involves approximating an infinite-dimensional operator or matrix by considering its finite-dimensional submatrices. By truncating the operator or matrix to a finite size, we can work with a more manageable finite-dimensional problem. We will use only some important results from \cite{bcn01} here. 
	
	In the context of the infinite-dimensional Williamson normal form, the finite section method can be employed to approximate the covariance operator or the associated matrices by considering only a finite number of modes or truncating the system to a finite-dimensional subspace. This approximation allows for practical computations and analysis.
	
	The next idea is a reformulation of some classical complex Banach algebra results into the real Banach algebra settings. Many results in complex Banach algebra theory, such as the Gelfand-Mazur theorem, the spectral radius formula, or the holomorphic functional calculus, can be reformulated for real Banach algebras. The underlying ideas and proofs can often be directly translated from the complex setting to the real setting.  This enables us to utilize existing knowledge and techniques from complex Banach algebra theory, providing valuable insights and tools for studying real Banach algebras in the context of Gaussian quantum states.	The ideas here are straight forward translations from the complex case, and hence we avoid the proofs.

	\subsection{Truncation Method}
	Let $\mathcal{H}$ be a complex separable Hilbert space, $\{e_{1},e_{2},...\}$ be its countable orthonormal basis, and $A \in B(\mathcal{H})$ be self-adjoint. Define $P_{n}$ to be the orthogonal projection onto the subspace $\mathcal{H}_n =\textrm{ span }\{e_{1},...,e_{n}\}$. Also define $A_{n}$ to be the restriction of the operator $P_{n}AP_{n}$ to $\mathcal{H}_n$. That is $A_n$ can be considered as the truncation of $A$ and can be identified with an $n \times n$ matrix. The problem of spectral approximation includes the identification of the two limiting sets; \cite{bcn01}
	\begin{align*}
		\textrm{ lim inf }\sigma(A_n) &:= \{\lambda \in \mathbb{C} : \lambda \textrm{ is the limit of some sequence } (\lambda_{n})_{n=1}^{\infty} \textrm{ with } \lambda_{n} \in \sigma(A_{n}) \}, \\
		\textrm{ lim sup }\sigma(A_n) &:= \{\lambda \in \mathbb{C} : \lambda \textrm{ is the partial limit of some sequence } (\lambda_{n})_{n=1}^{\infty} \textrm{ with } \lambda_{n} \in \sigma(A_{n}) \}
	\end{align*} (in a similar way we can define lim inf $\sigma_{sy}(A_n)$ and lim sup $\sigma_{sy}(A_n)$).
	These sets need not be the same always.
	For example, (see \cite{bcn01}), consider the matrix $B(a,b) = \begin{bmatrix}
		a & b \\
		b & -a
	\end{bmatrix},$
	where $a,b \in \mathbb{R}$. Choose any sequence $(a_n)_{n = 1}^{\infty}$ of numbers $a_n \in (0,1)$ and define $b_n \in (0,1)$ by $a_n^2 + b_n^2 = 1,$ and  define $$A = \textrm{ diag }(B(a_1, b_1), B(a_2, b_2), \cdots).$$ Then $\sigma(A) = \{-1, 1\}$. Since $\sigma(A_{2n}) = \{-1, 1 \}$ and $\sigma(A_{2n+1}) = \{-1, a_n, 1\}$, we have lim inf $\sigma(A_n) = \{-1, 1\}$ while lim sup $\sigma(A_n)$ as the union of the set $\{-1,1\}$ and the set of all partial limits of the sequence $(a_n)$.

	Let $m = \textrm{ inf }\sigma(A)$, $M = \textrm{ sup }\sigma(A)$. Then we have the following inclusions 	for every selfadjoint operator $A \in B(\mathcal{H})$ \cite{bcn01};
	$$\{m, M\} \subset \sigma(A) \subset \textrm{ lim inf }\sigma(A_n) \subset \textrm{ lim sup } \sigma(A_n) \subset [m, M], $$

	\begin{definition} \cite{bcn01}
		Let $A$ be a bounded operator on $\mathcal{H}$. Then the essential spectrum of $A$ denoted as $\sigma_{ess}(A)$ is defined as follows;
		$$\sigma_{ess}(A) := \{\lambda \in \mathbb{C} : A - \lambda I + K(\mathcal{H}) \textrm{ is not invertible in } B(\mathcal{H}) / K(\mathcal{H})\},$$ where $K(\mathcal{H})$ is the set of all compact linear maps on $\mathcal{H}$. 
	\end{definition}
	If $A$ is self-adjoint, the essential spectrum consists of all spectral values which are not eigenvalues of finite multiplicity. The next theorem states that for operators whose essential spectrum is connected, the spectrum can be 
	approximated using finite dimensional truncations.
	
	\begin{theorem} \cite{bcn01} \label{bcnthm}
		Let $A \in B(\mathcal{H})$ be self-adjoint and suppose $\sigma_{ess}(A)$ is connected. Then $\textrm{ lim inf }\sigma(A_n) = \textrm{ lim sup } \sigma(A_n) = \sigma(A).$
	\end{theorem}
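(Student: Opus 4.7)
The strategy is to reduce the theorem to a single missing inclusion. The chain displayed just above this statement already gives
\[
\sigma(A) \subseteq \textrm{lim inf }\sigma(A_n) \subseteq \textrm{lim sup }\sigma(A_n) \subseteq [m,M],
\]
so once we establish $\textrm{lim sup }\sigma(A_n) \subseteq \sigma(A)$ all three sets coincide. Thus the real content of the theorem is the absence of \emph{spectral pollution} under the hypothesis that $\sigma_{ess}(A)$ is connected.

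For a putative pollution point $\lambda \in \textrm{lim sup }\sigma(A_n)$ I would pick a subsequence $\lambda_{n_k} \to \lambda$ with $\lambda_{n_k} \in \sigma(A_{n_k})$ and unit eigenvectors $v_{n_k} \in \mathcal{H}_{n_k}$, and record the identity
\[
(A - \lambda I)v_{n_k} = (I - P_{n_k})A v_{n_k} + (\lambda_{n_k} - \lambda)v_{n_k}.
\]
If $(I - P_{n_k})A v_{n_k} \to 0$ along a further subsequence, then $\{v_{n_k}\}$ is a Weyl sequence and $\lambda \in \sigma(A)$, so the task reduces to excluding the alternative. I would next exploit the structure imposed by the hypothesis: since $A$ is self-adjoint and $\sigma_{ess}(A)$ is a connected closed subset of $[m,M]$, it is an interval $[\alpha,\beta]$ (possibly empty or a single point), and $\sigma(A)\cap [m,M]$ consists of $[\alpha,\beta]$ together with isolated eigenvalues of finite multiplicity that can accumulate only at $\alpha$ or $\beta$. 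Every gap of $\sigma(A)$ inside $[m,M]$ therefore lies entirely in $[m,\alpha)$ or $(\beta,M]$.

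The main obstacle is the quantitative comparison of the spectra in these one-sided regions, which I would attack via the min-max principle for the self-adjoint compression. Writing $\mu_k(\cdot)$ for the $k$-th variational value counted from below, one has $\mu_k(A_n) \geq \mu_k(A)$ for every $k$, and $\mu_k(A)$ equals the $k$-th isolated eigenvalue of $A$ below $\alpha$ so long as such an eigenvalue exists and equals $\alpha$ afterward. Fixing $\varepsilon > 0$ with $\lambda < \alpha - \varepsilon$, this caps the number of eigenvalues of $A_n$ below $\alpha - \varepsilon$ by the finite count $N(\varepsilon)$ of eigenvalues of $A$ there, while a standard trial-subspace argument (project the eigenvectors of $A$ into $\mathcal{H}_n$) upgrades the inequality $\mu_k(A_n) \geq \mu_k(A)$ to the convergence $\mu_k(A_n) \to \mu_k(A)$ for each fixed $k$. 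These two facts together force the eigenvalues of $A_n$ below $\alpha - \varepsilon$ to track the corresponding eigenvalues of $A$, leaving no room for $\lambda_{n_k} \to \lambda$. Letting $\varepsilon \downarrow 0$ rules out pollution in $[m,\alpha)$, a symmetric argument handles $(\beta,M]$, and every $\lambda \in [\alpha,\beta]$ already lies in $\sigma(A)$. The delicate point throughout is the eigenvalue bookkeeping in this min-max count, and connectedness of $\sigma_{ess}(A)$ is precisely what makes the bookkeeping close: a bounded gap of $\sigma_{ess}(A)$ would allow $A_n$ to accumulate eigenvalues inside it without contradicting any of the inequalities used above.
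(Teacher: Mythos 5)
The paper cites Theorem~\ref{bcnthm} from \cite{bcn01} and does not reproduce a proof, so there is no in-paper argument to compare against; I will evaluate your proposal on its own merits. Your proof is essentially correct and, to my knowledge, follows the standard route found in the spectral-approximation literature: use the already-established chain $\sigma(A)\subseteq \textrm{lim inf }\sigma(A_n)\subseteq\textrm{lim sup }\sigma(A_n)\subseteq[m,M]$ to reduce the claim to the single inclusion $\textrm{lim sup }\sigma(A_n)\subseteq\sigma(A)$, then rule out pollution in $[m,\alpha)$ and $(\beta,M]$ by a min--max bookkeeping argument. The core of the argument is sound: the monotonicity $\mu_k(A_n)\geq\mu_k(A)$ follows from restricting the infimum in the variational characterization to subspaces of $\mathcal{H}_n$; the cap $N(\varepsilon)$ on the number of eigenvalues of $A_n$ below $\alpha-\varepsilon$ follows from $\mu_{N(\varepsilon)+1}(A_n)\geq\mu_{N(\varepsilon)+1}(A)\geq\alpha-\varepsilon$; the convergence $\mu_k(A_n)\to\mu_k(A)$ for the finitely many indices $k\leq N(\varepsilon)$ follows by projecting the corresponding genuine eigenvectors of $A$ into $\mathcal{H}_n$ (and here you only ever need $k\leq N(\varepsilon)\leq S$, so eigenvectors always exist). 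Together these force every eigenvalue of $A_n$ to lie, for large $n$, either within $\delta$ of an eigenvalue of $A$ below $\alpha$ or at height $\geq\alpha-\varepsilon$, which excludes a subsequence converging to a point of $[m,\alpha)\setminus\sigma(A)$. The identification of connectedness of $\sigma_{ess}(A)$ as the exact hypothesis that makes the counting close is the right conceptual point, and indeed the failure in the paper's example with essential spectrum $\{-1,1\}$ is precisely the disconnected case.

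One small stylistic remark: the paragraph introducing the identity $(A-\lambda I)v_{n_k}=(I-P_{n_k})Av_{n_k}+(\lambda_{n_k}-\lambda)v_{n_k}$ and the prospective Weyl-sequence argument is a dead end in your write-up --- you never return to it, and the min--max argument that follows does not rely on it. It is not wrong, but it suggests a line of attack you abandon, so it is better omitted or replaced by a sentence stating directly that you will show no eigenvalue of $A_n$ can cluster at a point of a spectral gap of $A$. Beyond that, the proof is complete once the bookkeeping with a fixed $\varepsilon$ satisfying $\lambda+2\delta<\alpha-\varepsilon$ (where $(\lambda-2\delta,\lambda+2\delta)\cap\sigma(A)=\emptyset$) is written out, and the symmetric argument on $(\beta,M]$ is recorded.
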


	\subsection{Results from Real Banach Algebras}\label{realb}
	Here we give some definitions and results from the real Banach algebras.
	\begin{definition}
		Let $\mathcal{B}$ be a real algebra, then the complexification of $\mathcal{B}$ is the complex algebra given by $\mathcal{A} = \mathcal{B} + i\mathcal{B}:= \{a + i.b : a,b \in \mathcal{B}\}$ with addition, multiplication and scalar multiplication defined respectively as follows:
		\begin{align*}
			(a + i.b) + (c + i.d) &= (a+b) + i.(c+d) \\
			(a+i.b)(c+i.d) &= (ac - bd) + i.(ad + bc) \\
			(\alpha + i. \beta)(a +i.b) &= (\alpha a - \beta b) + i.(\alpha b + \beta a)
		\end{align*} for all $a,b,c,d \in \mathcal{B}$ and $\alpha, \beta \in \mathbb{R}.$ 		
	\end{definition} 
	
	\noindent For a complex algebra $\mathcal{A}$ with unit $e$ and $a \in \mathcal{A}$, we define the spectrum of $a$ in $\mathcal{A}$ as the subset of $\mathbb{C}$ denoted as $\sigma(a)$ by
	$\sigma(a) = \{\lambda \in \mathbb{C}: a - \lambda e \textrm{ is not invertible in } \mathcal{A}\}.$ It is natural to think that for a real algebra $\mathcal{B}$ with unit $e$, the spectrum 
	of an element $a\in \mathcal{B}$  be defined as the set of all real $\lambda$, for which $a - \lambda e$, is not invertible in $\mathcal{B}$. But this definition will make the spectra of many elements to be the empty set. To avoid such situations, we adopt the following definition:
	
	\begin{definition} \cite{ik01}
		Let $\mathcal{B}$ be a real algebra with unit $e$. For $a \in \mathcal{B}$, the 
		spectrum of $a\in \mathcal{B}$ is the subset $\sigma(a)$ of $\mathbb{C}$ defined as $\sigma(a) := \{ s + it \in \mathbb{C} : (a - se)^2 + t^2e \textrm{ is not invertible in }\mathcal{B}\}.$ 
		Clearly, $s + it \in \sigma(a)$ if and only if $s - it \in \sigma(a)$.
	\end{definition}
	The above definition is equivalent to the following statement: "If $\mathcal{B}$ is a real algebra with unit and $\mathcal{A}$ is the complexification of $\mathcal{B}$ then $\sigma_{\mathcal{B}}(a) = \sigma_{\mathcal{A}}(a + i.0)$ for all $a \in \mathcal{B}$" \cite{shk01}. We know that for any complex Banach algebra $\mathcal{A}$, the spectrum of every $x \in \mathcal{A}$ is a compact subset of $\mathbb{C}$. This is true for real Banach algebras by Corollary $1.1.18$  \cite{shk01}.
	
	\begin{definition} \cite{shk01}
		Let $\mathcal{B}$ be a real algebra. The carrier space of $\mathcal{B}$, denoted by Car$(\mathcal{B})$, is the set of all nonzero homomorphisms from $\mathcal{B}$ to $\mathbb{C}$, regarded as a real algebra.
	\end{definition} 
	
	\noindent Let $\Phi \in$ Car$(\mathcal{B})$ and define $\overline{\Phi}$ by $\overline{\Phi}(a) = \overline{\Phi(a)}$ for all $a \in \mathcal{B}$. Then it is easy to see that $\overline{\Phi} \in $ Car$(\mathcal{B})$. Let $\tau: \textrm{ Car}(\mathcal{B}) \rightarrow \textrm{ Car}(\mathcal{B})$ be defined by $\tau(\Phi) = \overline{\Phi}$ for $\Phi \in $ Car$(\mathcal{B})$.
	
	\begin{definition} \cite{shk01}
		For $a$ in $\mathcal{B}$, the Gelfand transform of $a$ is the map $\hat{a} :$ Car$(\mathcal{B}) \rightarrow \mathbb{C}$, given by $\hat{a}(\Phi) = \Phi(a)$ for all $\Phi$ in Car$(\mathcal{B})$. The weakest topology on Car$(\mathcal{B})$ that makes $\hat{a}$ continuous on Car$(\mathcal{B})$ for all $a$ in $\mathcal{B}$ is called the Gelfand topology	on Car$(\mathcal{B})$.
	\end{definition} 
	
	\noindent Theorem $1.2.9(vi)$  \cite{shk01} states that if $\mathcal{B}$ is a real commutative Banach algebra with unit $e$ then, for every $a$ in $\mathcal{B}$, the range of $\hat{a}$ is the spectrum of $a$ in $\mathcal{B}$.
	
	\begin{definition} \cite{wr01}
		If $S$ is a subset of a real Banach algebra $\mathcal{B}$, the centralizer of $S$ is the set $\Gamma (S) = \{x \in \mathcal{B} : xs = sx \textrm{ for every } s \in S\}.$ We say that $S$ commutes if any two elements of $S$ commute with each other. 
	\end{definition} \noindent The next two theorems gives us the real analogue of Theorem 11.22 and Theorem 11.23 in \cite{wr01}. The proof is the same as in the complex case, hence we omit them.
	\begin{theorem}
		Suppose $\mathcal{B}$ is a real Banach algebra with unit $e, S \subset \mathcal{B}, S$ commutes and $\hat{\mathcal{B}} = \Gamma(\Gamma(S))$. Then $\hat{\mathcal{B}}$ is commutative real Banach algebra, $S \subset\hat{\mathcal{B}}$ and $\sigma_{\hat{\mathcal{B}}}(x) = \sigma_{\mathcal{B}}(x)$ for every $x \in \hat{\mathcal{B}}.$
	\end{theorem}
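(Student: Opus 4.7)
The plan is to mimic the proof of the corresponding complex result (Rudin, Theorem 11.22) and simply check that each step survives the passage to the real setting with the spectrum defined through $(a-se)^2+t^2e$. The argument splits into three steps: algebraic structure of $\hat{\mathcal{B}}$, its commutativity, and the equality of spectra.

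First I would verify that for any subset $T \subset \mathcal{B}$ the centralizer $\Gamma(T)$ is a closed real subalgebra of $\mathcal{B}$ containing $e$; closedness is immediate from continuity of multiplication, and the algebraic axioms are direct verifications. Applying this observation to $T=S$ and then to $T=\Gamma(S)$ shows $\hat{\mathcal{B}}=\Gamma(\Gamma(S))$ is itself a closed unital real subalgebra of $\mathcal{B}$, hence a real Banach algebra. Next, the hypothesis that $S$ commutes gives $S\subset \Gamma(S)$, and then every $x\in \Gamma(\Gamma(S))$ commutes with every element of $\Gamma(S)\supset S$, so $S\subset \hat{\mathcal{B}}$ and $\hat{\mathcal{B}}\subset \Gamma(S)$. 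Combining the two inclusions, any pair $x,y\in \hat{\mathcal{B}}$ satisfies $x\in \Gamma(S)$ while $y\in \Gamma(\Gamma(S))$, which yields $xy=yx$; therefore $\hat{\mathcal{B}}$ is commutative.

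For the spectrum equality, the inclusion $\sigma_{\mathcal{B}}(x)\subset \sigma_{\hat{\mathcal{B}}}(x)$ is automatic, since invertibility in the subalgebra implies invertibility in the larger algebra. For the reverse direction, fix $x\in \hat{\mathcal{B}}$ and $s+it\notin \sigma_{\mathcal{B}}(x)$, so that $y:=(x-se)^2+t^2e$ is invertible in $\mathcal{B}$. Because $\hat{\mathcal{B}}$ is a unital subalgebra containing $x$, we have $y\in \hat{\mathcal{B}}$. The crucial observation is the standard commutation lemma: if $y$ commutes with an element $u$ and $y^{-1}$ exists in $\mathcal{B}$, then $y^{-1}$ also commutes with $u$, since $yu=uy$ gives $u=y^{-1}uy$ and hence $y^{-1}u=uy^{-1}$. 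Applying this to every $u\in \Gamma(S)$ shows $y^{-1}\in \Gamma(\Gamma(S))=\hat{\mathcal{B}}$, so $y$ is invertible inside $\hat{\mathcal{B}}$ and $s+it\notin \sigma_{\hat{\mathcal{B}}}(x)$.

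There is no real technical obstacle; the only point requiring mild care is the use of the real-algebra definition of $\sigma$ through the polynomial $(a-se)^2+t^2e$, which is why I would phrase the spectrum argument directly in terms of the invertibility of this element rather than in terms of $x-\lambda e$. Since this polynomial lies in $\hat{\mathcal{B}}$ whenever $x$ does, and the commutation lemma transports its inverse back into $\hat{\mathcal{B}}$, the translation from the complex proof goes through verbatim.
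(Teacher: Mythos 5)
Your proof is correct and is precisely the argument the paper has in mind: the authors explicitly state that the result is the real analogue of Rudin's Theorem 11.22 and omit the proof as being "the same as in the complex case," which is exactly what you carried out, with the only adaptation being to phrase the spectral argument in terms of invertibility of $(x-se)^2+t^2e$ rather than $x-\lambda e$. No gaps.
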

	
	\begin{theorem} \label{thminclu}
		Suppose $\mathcal{B}$ is a real commutative Banach algebra, $x,y \in \mathcal{B}$ and $xy = yx$. Then $$\sigma(x + y) \subset \sigma(x) + \sigma(y) \quad and \quad \sigma(xy) \subset \sigma(x)\sigma(y).$$
	\end{theorem}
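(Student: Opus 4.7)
The plan is to mimic Rudin's proof of Theorem 11.23 in the complex setting (cited in the paper), using the Gelfand-type machinery for real commutative Banach algebras recalled in Subsection \ref{realb}. The essential ingredient is the range-equals-spectrum statement attributed to Theorem 1.2.9(vi) of \cite{shk01}: for every $a \in \mathcal{B}$, one has $\sigma_{\mathcal{B}}(a) = \hat{a}(\mathrm{Car}(\mathcal{B}))$, i.e.\ the spectrum of $a$ is exactly the range of its Gelfand transform on the carrier space. Since $\mathcal{B}$ is commutative (and, as is tacit from the preceding theorem, unital; otherwise I would first adjoin a unit), this applies to $x$, $y$, $x+y$, and $xy$ simultaneously.

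For the inclusion $\sigma(x+y) \subset \sigma(x) + \sigma(y)$, I would pick an arbitrary $\lambda \in \sigma(x+y)$ and use the range-equals-spectrum theorem to obtain $\Phi \in \mathrm{Car}(\mathcal{B})$ with $\lambda = \widehat{x+y}(\Phi) = \Phi(x+y)$. Because $\Phi$ is by definition a nonzero homomorphism from $\mathcal{B}$ into $\mathbb{C}$ (viewed as a real algebra), it is additive, so $\lambda = \Phi(x) + \Phi(y) = \hat{x}(\Phi) + \hat{y}(\Phi)$. Applying the range-equals-spectrum theorem in the opposite direction gives $\hat{x}(\Phi) \in \sigma(x)$ and $\hat{y}(\Phi) \in \sigma(y)$, whence $\lambda \in \sigma(x) + \sigma(y)$. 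The argument for $\sigma(xy) \subset \sigma(x)\sigma(y)$ is the verbatim analogue, exploiting multiplicativity of $\Phi$ in place of additivity: $\lambda = \Phi(xy) = \Phi(x)\Phi(y) \in \sigma(x)\sigma(y)$.

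The only place where the real setting could create friction, and therefore the step I would scrutinise most carefully, is the coherence between the two-sided definition of the real spectrum ($s+it \in \sigma(a)$ iff $(a-se)^{2}+t^{2}e$ is not invertible) and the appearance of complex values $\Phi(x) \in \mathbb{C}$ via the Gelfand transform. That coherence is precisely what Theorem 1.2.9(vi) of \cite{shk01} encodes, together with the symmetry $\overline{\Phi} \in \mathrm{Car}(\mathcal{B})$ noted in the text (which guarantees that the range of $\hat{a}$ is closed under complex conjugation, matching the symmetry of $\sigma(a)$ about the real axis). Once this is in hand, commutativity of $\mathcal{B}$ obviates any need to pass to the complexification, and the homomorphism identities $\Phi(x+y)=\Phi(x)+\Phi(y)$, $\Phi(xy)=\Phi(x)\Phi(y)$ give the two inclusions essentially for free, exactly as in the complex case.
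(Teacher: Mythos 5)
Your proof is correct and is exactly the proof the paper has in mind: the authors omit it, stating that it is ``the same as in the complex case'' (Rudin's Theorem 11.23), and you carry out precisely that argument with the real carrier space $\mathrm{Car}(\mathcal{B})$ and the range-equals-spectrum result (Theorem 1.2.9(vi) of \cite{shk01}) replacing their complex counterparts. You are also right that, because $\mathcal{B}$ is assumed commutative here (which makes the hypothesis $xy=yx$ redundant), the bicommutant reduction of the preceding theorem (the real analogue of Rudin's 11.22) is not even needed and the range-equals-spectrum theorem applies directly to $\mathcal{B}$.
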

	These spectral inclusion results for the product and sum of elements in a real Banach algebra provide valuable insights into the structure of the spectra and their relationship with the individual elements. They allow us to establish bounds on the spectra of the sum and product based on the spectra of the individual elements. We use them in the computation of bounds for symplectic spectrum of operators that lie in our classes, $\mathbb{A}$ and $\mathbb{B}$.
	
	\section{Approximation of Symplectic Spectrum}
	Throughout the rest of the article, $\mathcal{H}$ will denote a real separable Hilbert space. First we make a small observation in the finite-dimensional setting.
	
	\begin{lemma} \label{rstfdAA}
		Let $T \in M_{2n}(\mathbb{R})$ be a strictly positive matrix. Suppose $T$ takes the form $\begin{bmatrix}
			A & 0 \\
			0 & A
		\end{bmatrix}$, where $A$ is an $n \times n$ strictly positive matrix, then $\sigma_{sy}(T) = \sigma(A)$.
	\end{lemma}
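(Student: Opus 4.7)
The plan is to build a Williamson normal form of $T$ explicitly, by using the spectral theorem on $A$, and then invoke the uniqueness clause of Theorem~\ref{wnffd} to identify the symplectic spectrum.

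First, since $A$ is a strictly positive $n\times n$ matrix, the spectral theorem yields an orthogonal matrix $O$ and a strictly positive diagonal matrix $D_0$, whose diagonal entries are precisely the eigenvalues of $A$, such that $A = O D_0 O^T$. The candidate for the symplectic part of the decomposition is
$$
L = \begin{bmatrix} O^T & 0 \\ 0 & O^T \end{bmatrix}.
$$

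Next, I would verify two things. First, $L$ is symplectic: a direct block computation using $OO^T = O^TO = I$ gives $L^T J L = J$, where $J = \begin{bmatrix} 0 & I \\ -I & 0 \end{bmatrix}$. Second, a block multiplication shows
$$
L^T \begin{bmatrix} D_0 & 0 \\ 0 & D_0 \end{bmatrix} L
= \begin{bmatrix} O D_0 O^T & 0 \\ 0 & O D_0 O^T \end{bmatrix}
= \begin{bmatrix} A & 0 \\ 0 & A \end{bmatrix} = T.
$$
Thus $(L, D_0)$ realizes Williamson's normal form for $T$. By the uniqueness assertion of Theorem~\ref{wnffd}, the multiset of diagonal entries of any such $D$ is uniquely determined by $T$, so $\sigma_{sy}(T)$ equals the set of diagonal entries of $D_0$, which is exactly $\sigma(A)$.

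There is essentially no deep obstacle here; the only point requiring attention is recognizing that block-diagonal orthogonal matrices of the form $\begin{bmatrix} O^T & 0 \\ 0 & O^T \end{bmatrix}$ lie in the symplectic group, which is immediate from the block structure of $J$. The rest is a routine block computation plus an appeal to uniqueness. This lemma will serve as the finite-dimensional model that later arguments in Class~$\mathbb{A}$ and Class~$\mathbb{B}$ imitate in the infinite-dimensional setting via the truncation method.
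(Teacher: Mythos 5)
Your proof is correct and takes essentially the same approach as the paper: orthogonally diagonalize $A$, embed the orthogonal matrix as a block-diagonal symplectic transformation, verify the factorization, and read off the symplectic spectrum from the diagonal factor. The only cosmetic difference is that the paper concludes by noting $L$ is also orthogonal (so $\sigma(D)=\sigma(T)=\sigma(A)$), whereas you appeal to the uniqueness clause of Williamson's theorem; both are valid and equivalent.
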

	
	\begin{proof}
		Since $A$ is a strictly positive matrix, it is orthogonally diagonalizable, that is there exist an orthogonal matrix $O$ and a diagonal matrix $D$ such that $A = O^TDO.$ Also $\sigma(A) = \sigma(O^TDO)= \sigma(D).$ Define the matrix $L$ as $\begin{bmatrix}
			O & 0 \\
			0 & O
		\end{bmatrix}.$ Then it is easy to verify the identities,
		$	L^TJL  = J$ and $L^TL= LL^T = I$, that is $L$ is symplectic, and orthogonal. 
		
		Now  \begin{align*}
			L^T
			\begin{bmatrix}
				D & 0 \\
				0 & D
			\end{bmatrix}L &= 
			\begin{bmatrix}
				O^T & 0 \\
				0 & O^T
			\end{bmatrix}
			\begin{bmatrix}
				D & 0 \\
				0 & D
			\end{bmatrix}
			\begin{bmatrix}
				O & 0 \\
				0 & O
			\end{bmatrix} \\
			&= 
			\begin{bmatrix}
				O^TDO & 0 \\
				0 & O^TDO
			\end{bmatrix} \\ 
			&= 
			\begin{bmatrix}
				A & 0 \\
				0 & A
			\end{bmatrix} \\ &= T.
		\end{align*} That is, $\sigma_{sy}(T) = \sigma(D)$. Also since $L$ is orthogonal, the above set of equations gives $\sigma(D) = \sigma(T) = \sigma(A)$. Therefore, $\sigma_{sy}(T) = \sigma(D) = \sigma(A).$
	\end{proof}
	
	\begin{remark} \label{touseinthm}
		For the Williamson's normal form of matrices, the matrix $D$ needs to be diagonal while for the infinite-dimensional Williamson's normal form, $D$ is taken to be a positive invertible operator. Hence, in that case, the above lemma holds trivially with the identity transformation on $\mathcal{H} \oplus \mathcal{H}$ as the symplectic transformation.
	\end{remark}
	If $\{e_{1}, e_{2},...\}$ is the countable orthonormal basis for $\mathcal{H}$, then $\{(e_{1},0), (e_{2},0),...\}$ $\cup$ $\{(0, e_{1}), (0, e_{2}),...\}$ will be a countable orthonormal basis for $\mathcal{H} \oplus \mathcal{H}.$ Now define $P_{2n}$ as the orthogonal projection onto the $2n-$dimensional subspace $$(\mathcal{H} \oplus \mathcal{H})_{2n} = \textrm{ span } (\{(e_{1},0), (e_{2},0),...,(e_{n}, 0)\} \cup \{(0, e_{1}), (0, e_{2}),..., (0,e_{n})\}).$$ Define the operator $T_{2n}$ on $(\mathcal{H} \oplus \mathcal{H})_{2n}$ as the restriction of the operator $P_{2n}TP_{2n}$ to $(\mathcal{H} \oplus \mathcal{H})_{2n}$. With these preparatory notions, we can now state a positive result in symplectic spectral approximation for a very special class of operators.
	
	\begin{theorem} \label{rstAA}
		Let $A$ be a strictly positive invertible operator on $\mathcal{H}$. Consider the operator $T : \mathcal{H} \oplus \mathcal{H} \rightarrow \mathcal{H} \oplus \mathcal{H}$ of the form $T = 
		\begin{bmatrix}
			A & 0 \\
			0 & A
		\end{bmatrix}
		$. Suppose that the essential spectrum of $A$ is connected, then the symplectic spectrum can be approximated using the truncation method that is 
		$$\textrm{lim inf } \sigma_{sy}(T_{2n}) = \textrm{ lim sup } \sigma_{sy}(T_{2n}) = \sigma_{sy}(T).$$
	\end{theorem}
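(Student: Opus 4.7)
The plan is to reduce the symplectic spectral approximation of $T$ to the ordinary spectral approximation of $A$, and then invoke Theorem \ref{bcnthm}.

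First, I would identify the finite-dimensional truncation $T_{2n}$. Because the chosen basis for $\mathcal{H}\oplus\mathcal{H}$ lists all the vectors $(e_i,0)$ for $i=1,\dots,n$ before all the vectors $(0,e_i)$ for $i=1,\dots,n$, and because $T$ is block-diagonal in the $\mathcal{H}\oplus\mathcal{H}$ decomposition, the projection $P_{2n}TP_{2n}$ restricted to $(\mathcal{H}\oplus\mathcal{H})_{2n}$ takes the block form
\[
T_{2n} \;=\; \begin{bmatrix} A_n & 0 \\ 0 & A_n \end{bmatrix},
\]
where $A_n$ is the usual $n\times n$ truncation of $A$ with respect to $\{e_1,\dots,e_n\}$. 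I would state this as a preliminary observation at the start of the proof.

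Next, I would apply Lemma \ref{rstfdAA} to the finite matrix $T_{2n}$: since $A_n$ is an $n\times n$ strictly positive matrix (the compression of a strictly positive operator, at least for all sufficiently large $n$), the lemma yields
\[
\sigma_{sy}(T_{2n}) \;=\; \sigma(A_n).
\]
On the other hand, Remark \ref{touseinthm} applied to $T$ itself gives $\sigma_{sy}(T)=\sigma(A)$ (taking the symplectic transformation to be the identity on $\mathcal{H}\oplus\mathcal{H}$, and with $D=A$ already positive and invertible).

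With these two identifications in hand, the symplectic approximation problem collapses to the ordinary one: I would compute
\[
\mathrm{lim\,inf}\;\sigma_{sy}(T_{2n}) \;=\; \mathrm{lim\,inf}\;\sigma(A_n), \qquad \mathrm{lim\,sup}\;\sigma_{sy}(T_{2n}) \;=\; \mathrm{lim\,sup}\;\sigma(A_n),
\]
and then invoke Theorem \ref{bcnthm} for the self-adjoint operator $A$: since $\sigma_{ess}(A)$ is assumed connected, both limiting sets coincide with $\sigma(A)=\sigma_{sy}(T)$, finishing the proof.

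There is really no hard step here; the only thing to be careful about is the bookkeeping around the basis ordering that makes $T_{2n}$ inherit the block-diagonal structure of $T$, and the verification that $A_n$ is strictly positive so that Lemma \ref{rstfdAA} genuinely applies to each truncation (which follows from the strict positivity of $A$ for $n$ large enough, and the small-$n$ cases do not affect either limiting set). The conceptual content is entirely contained in Theorem \ref{bcnthm}; the symplectic layer is peeled off by Lemma \ref{rstfdAA} and Remark \ref{touseinthm}.
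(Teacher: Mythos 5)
Your proposal is correct and follows essentially the same approach as the paper: identify $T_{2n}=\begin{bmatrix}A_n&0\\0&A_n\end{bmatrix}$, use Lemma \ref{rstfdAA} to get $\sigma_{sy}(T_{2n})=\sigma(A_n)$, use Remark \ref{touseinthm} for $\sigma_{sy}(T)=\sigma(A)$, and then invoke Theorem \ref{bcnthm}. One small note: the caveat about strict positivity of $A_n$ ``for $n$ large enough'' is unnecessary, since any compression $P_nAP_n$ of a strictly positive operator $A$ is strictly positive on $\mathcal{H}_n$ for every $n$.
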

	
	\begin{proof}
		If $T$ takes the form $\begin{bmatrix}
			A & 0 \\
			0 & A
		\end{bmatrix}$, by Remark \ref{touseinthm} we have $\sigma_{sy}(T) = \sigma(A).$ 
		Also, $ T_{2n} = 
		\begin{bmatrix}
			A_{n} & 0 \\
			0 & A_{n}
		\end{bmatrix}. $ Hence by Lemma \ref{rstfdAA}, $\sigma_{sy}(T_{2n}) = \sigma(A_{n}),$ that is the symplectic spectral approximation of $T$ is actually the spectral approximation of $A$.	Since $\sigma_{ess}(A)$ is connected, by Theorem \ref{bcnthm} we have $\textrm{lim inf } \sigma(A_{n}) = \textrm{ lim sup } \sigma(A_{n}) = \sigma(A).$ That is, $$\textrm{lim inf } \sigma_{sy}(T_{2n}) = \textrm{ lim sup } \sigma_{sy}(T_{2n}) = \sigma_{sy}(T).$$
	\end{proof}
	
	\begin{example}
		Let $a$ be a positive real-valued continuous function in $L^{\infty}(\mathbb{T})$, where $\mathbb{T}$ is the complex unit circle, such that $a_{n} = a_{-n}$, for $n = 0,1,...$, where $a_{n}$s are the Fourier coefficients of $a$. Then the Toeplitz matrix
		$$ A = 
		\begin{bmatrix}
			a_{0} & a_{1} & a_{2} & \cdots \\
			a_{1} & a_{0} & a_{1} & \cdots \\
			a_{2} & a_{1} & a_{0} & \cdots \\
			\cdots & \cdots & \cdots & \cdots 
		\end{bmatrix}$$ induces a bounded, positive invertible operator on $l^{2}$. It is well-known that $\sigma(A) = \sigma_{ess}(A) = [\textrm{essinf } a, \textrm{ esssup } a],$ that is the essential spectrum of $A$ is connected. Therefore in this case with $T = 
		\begin{bmatrix}
			A & 0 \\
			0 & A
		\end{bmatrix}$ on $l^{2} \oplus l^{2}$, we have $\textrm{lim inf } \sigma_{sy}(T_{2n}) = \textrm{ lim sup } \sigma_{sy}(T_{2n}) = \sigma_{sy}(T) = \sigma(A).$
	\end{example} 
	
	The next example is motivated from Example 3.2 \cite{bcn01}. 
	
	\begin{example}
		Consider the function $f$ defined on $L^\infty[-\pi, \pi]$ as follows:
		$$
		f(x) = 
		\begin{cases}
			1, \quad x \in [-\pi, -\frac{\pi}{2}] \\
			2, \quad x \in [-\frac{\pi}{2}, \frac{\pi}{2}] \\
			1, \quad x \in [\frac{\pi}{2}, \pi]
		\end{cases}
		$$
		Then the $n^{th}$ Fourier coefficients $a_n$ is given as follows:
		$$
		a_n = a_{-n} =
		\begin{cases}
			\frac{3}{2}, \quad n = 0 \\
			\frac{1}{n\pi}, \quad n = 4k + 1 \\
			0, \quad n \textrm{ is non-zero and even} \\
			-\frac{1}{n\pi}, \quad n = 4k + 3
		\end{cases}
		$$
		and put 
		$$
		A = \left[ \begin{array}{c | cc| cc| cc|c}
			a_0 & a_{-1} & a_1 & a_{-2} & a_2 & a_{-3} & a_3 & \cdots \\
			\hline
			a_{1} & a_{0} & a_{2} & a_{-1} & a_{3} & a_{-2} & a_{4} & \cdots \\
			a_{-1} & a_{-2} & a_{0} & a_{-3} & a_{1} & a_{-4} & a_{2} & \cdots \\
			\hline
			a_{2} & a_{1} & a_{3} & a_{0} & a_{4} & a_{-1} & a_{5} & \cdots \\
			a_{-2} & a_{-3} & a_{-1} & a_{-4} & a_{0} & a_{-5} & a_{1} & \cdots \\
			\hline
			a_{3} & a_{2} & a_{4} & a_{1} & a_{5} & a_{-2} & a_{4} & \cdots \\
			a_{-3} & a_{-4} & a_{-2} & a_{-5} & a_{-1} & a_{-4} & a_{2} & \cdots \\
			\hline
			\cdots & \cdots & \cdots & \cdots & \cdots & \cdots & \cdots & \cdots
		\end{array}
		\right]
		$$ Then $A$ induces a bounded self-adjoint operator on $l^2$ such that $\sigma(A) = \sigma_{ess}(A) = \{ 1,2\}$ and lim sup $\sigma(A) = $ lim inf $\sigma(A) = [1,2]$. Now define $T : l^2 \oplus l^2 \rightarrow l^2 \oplus l^2$ as $T = \begin{bmatrix}
			A & 0 \\
			0 & A
		\end{bmatrix}$. Then $\sigma_{sy}(T) = \sigma(A) = \{1, 2\}$. The truncation method fails in this case as the limiting sets does not coincide with the spectrum of $A$.
	\end{example}

	\subsection{Williamson's normal form}
	
	Now we compute the Williamson's normal form of the two classes of operators defined in the introduction and approximate the symplectic spectrum of such operators.
	
	\begin{theorem} \label{rstAB}
		Let $A$, $B$ be strictly positive invertible operators on $\mathcal{H}$ such that $AB = BA$. Let $T : \mathcal{H} \oplus \mathcal{H} \rightarrow \mathcal{H} \oplus \mathcal{H}$ be defined as $$T = 
		\begin{bmatrix}
			A & 0 \\
			0 & B
		\end{bmatrix}.
		$$ Then $\sigma_{sy}(T) = \sigma(A^{\frac{1}{2}}B^{\frac{1}{2}}).$ In particular, if the essential spectrum of $A^{\frac{1}{2}}B^{\frac{1}{2}}$ is connected, then the symplectic spectrum can be approximated using the truncation method.
	\end{theorem}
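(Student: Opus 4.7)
My strategy is first to exhibit an explicit symplectic transformation that puts $T$ into Williamson's normal form with $D=A^{1/2}B^{1/2}$, and then to invoke Theorem~\ref{bcnthm} for the truncation statement. Since $A$ and $B$ are strictly positive invertible commuting self-adjoint operators, the Borel functional calculus places them inside a commutative von Neumann algebra, so any real powers $A^{\alpha},B^{\beta}$ are well-defined, self-adjoint, positive invertible, and pairwise commuting. I would set
\[
P := A^{1/4}B^{-1/4}, \qquad Q := P^{-1} = A^{-1/4}B^{1/4}, \qquad D := A^{1/2}B^{1/2},
\]
and $M := \begin{bmatrix} P & 0 \\ 0 & Q \end{bmatrix}$. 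Using $P^{T}=P$, $Q^{T}=Q$, and $PQ = QP = I$, a direct block calculation gives
\[
M^{T} J M \;=\; \begin{bmatrix} 0 & PQ \\ -QP & 0 \end{bmatrix} \;=\; J,
\]
so $M$ is symplectic. A further block multiplication, using commutativity of all the fractional powers, produces $PDP = A$ and $QDQ = B$, so that
\[
M^{T}\begin{bmatrix} D & 0 \\ 0 & D \end{bmatrix} M \;=\; \begin{bmatrix} A & 0 \\ 0 & B \end{bmatrix} \;=\; T.
\]
The uniqueness clause in Theorem~\ref{wnfifd} (up to orthogonal conjugation of $D$) then yields $\sigma_{sy}(T) = \sigma(D) = \sigma(A^{1/2}B^{1/2})$.

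For the second assertion, once the symplectic spectrum has been identified with the ordinary spectrum of the self-adjoint operator $A^{1/2}B^{1/2}$, the assumption that $\sigma_{ess}(A^{1/2}B^{1/2})$ is connected lets me apply Theorem~\ref{bcnthm} directly to $A^{1/2}B^{1/2}$, giving
\[
\textrm{lim inf }\sigma\bigl((A^{1/2}B^{1/2})_{n}\bigr) \;=\; \textrm{lim sup }\sigma\bigl((A^{1/2}B^{1/2})_{n}\bigr) \;=\; \sigma(A^{1/2}B^{1/2}) \;=\; \sigma_{sy}(T).
\]
To rephrase this as a statement about $\sigma_{sy}(T_{2n})$, I would apply the finite-dimensional counterpart of Steps~1--3 to $T_{2n} = \begin{bmatrix} A_{n} & 0 \\ 0 & B_{n} \end{bmatrix}$: in even dimension, $\sigma_{sy}(T_{2n})$ is given by the positive square roots of the eigenvalues of $A_{n}^{1/2}B_{n}A_{n}^{1/2}$, which equivalently is the set of singular values of $A_{n}^{1/2}B_{n}^{1/2}$.

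\noindent\textbf{Main obstacle.} The algebraic construction of $M$ and $D$ is routine; the real difficulty is in the truncation half. The basis $\{e_{j}\}$ used to define the sections need not jointly diagonalize $A$ and $B$, so in general $A_{n}$ and $B_{n}$ do not commute and the section $(A^{1/2}B^{1/2})_{n}$ does not equal $A_{n}^{1/2}B_{n}^{1/2}$. Bridging this gap requires showing that the two sequences $\sigma\bigl((A^{1/2}B^{1/2})_{n}\bigr)$ and $\sigma_{sy}(T_{2n})$ share the same $\liminf$ and $\limsup$. I expect this to follow from a strong-operator-convergence argument: $A_n^{1/2}\to A^{1/2}$ and $B_n^{1/2}\to B^{1/2}$ strongly on the rising union of the $(\mathcal{H}\oplus\mathcal{H})_{2n}$, allowing one to control the distance between singular values of $A_{n}^{1/2}B_{n}^{1/2}$ and spectral values of $(A^{1/2}B^{1/2})_{n}$ and close the loop with Theorem~\ref{bcnthm}.
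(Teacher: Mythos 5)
Your construction of the symplectic transformation is exactly the paper's: the paper also takes $L=\begin{bmatrix}A^{1/4}B^{-1/4}&0\\0&A^{-1/4}B^{1/4}\end{bmatrix}$, verifies $L^TJL=J$, computes $L^T\begin{bmatrix}A^{1/2}B^{1/2}&0\\0&A^{1/2}B^{1/2}\end{bmatrix}L=T$, and concludes $\sigma_{sy}(T)=\sigma(A^{1/2}B^{1/2})$. That half of your argument is a match.

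For the truncation half, you have flagged a real issue, but it is not actually a gap in the paper's proof, because the paper proves something weaker than what you set out to prove. You interpreted ``the symplectic spectrum can be approximated using the truncation method'' as the statement $\liminf\sigma_{sy}(T_{2n})=\limsup\sigma_{sy}(T_{2n})=\sigma_{sy}(T)$, analogous to Theorem~\ref{rstAA}, and you are right that this would require comparing $\sigma_{sy}(T_{2n})$ (the singular values of $A_n^{1/2}B_n^{1/2}$) with $\sigma((A^{1/2}B^{1/2})_n)$ --- a comparison that is not automatic because $A_n$ and $B_n$ need not commute and $A_n^{1/2}B_n^{1/2}\neq(A^{1/2}B^{1/2})_n$ in general. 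What the paper's proof actually establishes, however, is only the equality $\sigma_{sy}(T)=\sigma(A^{1/2}B^{1/2})=\liminf\sigma\bigl((A^{1/2}B^{1/2})_n\bigr)=\limsup\sigma\bigl((A^{1/2}B^{1/2})_n\bigr)$, i.e.\ it truncates the self-adjoint operator $A^{1/2}B^{1/2}$ itself and invokes Theorem~\ref{bcnthm}, not the truncations $T_{2n}$ of $T$. Under that reading, the ``main obstacle'' you identified never arises. If you do want the stronger assertion in terms of $\sigma_{sy}(T_{2n})$, then the perturbation/convergence argument you sketch would indeed need to be carried out and is not a trivial appendix; the paper does not address it here (the later Theorem~\ref{thmgcomainresult} handles the genuine $T_{2n}$ truncations for GCOs by a different route, via the self-adjoint operator $i\sqrt{S}J\sqrt{S}$).
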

	
	\begin{proof}
		Since $A$ and $B$ are positive invertible operators, they have unique square roots denoted by $A^{\frac{1}{2}}$ and $B^{\frac{1}{2}}$ respectively and they commute since $A$ and $B$ commute. Now define the operator $L$ on $\mathcal{H} \oplus \mathcal{H} \rightarrow \mathcal{H} \oplus \mathcal{H}$ as
		$$L = 
		\begin{bmatrix}
			A^{\frac{1}{4}}B^{- \frac{1}{4}} & 0 \\
			0 & A^{- \frac{1}{4}}B^{\frac{1}{4}}
		\end{bmatrix}
		$$
		Since $A$ and $B$ are positive and commuting so are their powers. Then, 
		
		\begin{align*}
			L^TJL &=
			\begin{bmatrix}
				A^{\frac{1}{4}}B^{- \frac{1}{4}} & 0 \\
				0 & A^{- \frac{1}{4}}B^{\frac{1}{4}}
			\end{bmatrix}^T
			\begin{bmatrix}
				0 & I \\
				-I & 0
			\end{bmatrix}
			\begin{bmatrix}
				A^{\frac{1}{4}}B^{- \frac{1}{4}} & 0 \\
				0 & A^{- \frac{1}{4}}B^{\frac{1}{4}}
			\end{bmatrix} \\
			&= \begin{bmatrix}
				(A^{\frac{1}{4}}B^{- \frac{1}{4}})^T & 0 \\
				0 & (A^{- \frac{1}{4}}B^{\frac{1}{4}})^T
			\end{bmatrix}
			\begin{bmatrix}
				0 & I \\
				-I & 0
			\end{bmatrix}
			\begin{bmatrix}
				A^{\frac{1}{4}}B^{- \frac{1}{4}} & 0 \\
				0 & A^{- \frac{1}{4}}B^{\frac{1}{4}}
			\end{bmatrix} \\
			&= \begin{bmatrix}
				A^{\frac{1}{4}}B^{- \frac{1}{4}} & 0 \\
				0 & A^{- \frac{1}{4}}B^{\frac{1}{4}}
			\end{bmatrix}
			\begin{bmatrix}
				0 & I \\
				-I & 0
			\end{bmatrix}
			\begin{bmatrix}
				A^{\frac{1}{4}}B^{- \frac{1}{4}} & 0 \\
				0 & A^{- \frac{1}{4}}B^{\frac{1}{4}}
			\end{bmatrix} \\
			&= \begin{bmatrix}
				0 & A^{\frac{1}{4}}B^{- \frac{1}{4}} \\
				-A^{- \frac{1}{4}}B^{\frac{1}{4}} & 0
			\end{bmatrix}
			\begin{bmatrix}
				A^{\frac{1}{4}}B^{- \frac{1}{4}} & 0 \\
				0 & A^{- \frac{1}{4}}B^{\frac{1}{4}}
			\end{bmatrix} \\
			&= \begin{bmatrix}
				0 & I \\
				-I & 0
			\end{bmatrix} \\
			&=J.
		\end{align*}
		
		That is $L$ is symplectic and 
		
		\begin{align*}
			L^T
			\begin{bmatrix}
				A^\frac{1}{2}B^{\frac{1}{2}} & 0 \\
				0 & A^\frac{1}{2}B^{\frac{1}{2}}		
			\end{bmatrix}L &= 
			\begin{bmatrix}
				A^{\frac{1}{4}}B^{- \frac{1}{4}} & 0 \\
				0 & A^{- \frac{1}{4}}B^{\frac{1}{4}}
			\end{bmatrix}^T
			\begin{bmatrix}
				A^\frac{1}{2}B^{\frac{1}{2}} & 0 \\
				0 & A^\frac{1}{2}B^{\frac{1}{2}}		
			\end{bmatrix}
			\begin{bmatrix}
				A^{\frac{1}{4}}B^{- \frac{1}{4}} & 0 \\
				0 & A^{- \frac{1}{4}}B^{\frac{1}{4}}
			\end{bmatrix} \\
			& = \begin{bmatrix}
				A^{\frac{1}{4}}B^{- \frac{1}{4}} & 0 \\
				0 & A^{- \frac{1}{4}}B^{\frac{1}{4}}
			\end{bmatrix}
			\begin{bmatrix}
				A^\frac{1}{2}B^{\frac{1}{2}} & 0 \\
				0 & A^\frac{1}{2}B^{\frac{1}{2}}		
			\end{bmatrix}
			\begin{bmatrix}
				A^{\frac{1}{4}}B^{- \frac{1}{4}} & 0 \\
				0 & A^{- \frac{1}{4}}B^{\frac{1}{4}}
			\end{bmatrix} \\
			&= \begin{bmatrix}
				A^{\frac{3}{4}}B^{\frac{1}{4}} & 0 \\
				0 & A^{\frac{1}{4}}B^{\frac{3}{4}}
			\end{bmatrix}
			\begin{bmatrix}
				A^{\frac{1}{4}}B^{- \frac{1}{4}} & 0 \\
				0 & A^{- \frac{1}{4}}B^{\frac{1}{4}}
			\end{bmatrix} \\
			&= \begin{bmatrix}
				A & 0 \\
				0 & B
			\end{bmatrix} \\
			&= T
		\end{align*}
		
		Since $A$ and $B$ are positive invertible and commuting, the operator $A^\frac{1}{2}B^{\frac{1}{2}}$ is positive invertible. Hence in this case we have $D = A^\frac{1}{2}B^{\frac{1}{2}}$ and $\sigma_{sy}(T) = \sigma(A^\frac{1}{2}B^{\frac{1}{2}})$.  Now suppose if $\sigma_{ess}(A^\frac{1}{2}B^{\frac{1}{2}})$ is connected, then by Theorem \ref{rstAA} we have
		$\sigma_{sy}(T) = \sigma(A^\frac{1}{2}B^{\frac{1}{2}}) = \textrm{lim inf } \sigma((A^\frac{1}{2}B^{\frac{1}{2}})_{n}) = \textrm{ lim sup } \sigma((A^\frac{1}{2}B^{\frac{1}{2}})_{n}).$
	\end{proof}
	We present some examples below to illustrate Theorem \ref{rstAB}. The examples of interest in quantum information theory, such as GCO, will appear later.
	\begin{example}
		Consider the real Hilbert space $l^2$. Let $A$ be the identity operator and define the diagonal operator $B$ as follows: 
		$$
		B = (b_{ij}) = \begin{cases}
			1 + \frac{1}{n}, \quad \textrm{ if } i = j = n^2 \\
			1, \qquad \textrm{       otherwise}
		\end{cases}
		$$ Define $T$ on $l^2 \oplus l^2$ as $T = \begin{bmatrix}
			A & 0 \\
			0 & B
		\end{bmatrix}$. Then $AB = BA$. The given operator satisfies the conditions in Theorem \ref{rstAB}. Then the operator $A^{\frac{1}{2}}B^{\frac{1}{2}}$ is the diagonal operator given by 
		$$
		A^{\frac{1}{2}}B^{\frac{1}{2}} = (c_{ij}) = \begin{cases}
			\left(1 + \frac{1}{n}\right)^{\frac{1}{2}}, \quad \textrm{ if } i = j = n^2 \\
			1, \qquad \textrm{       otherwise}
		\end{cases}
		$$ Then $\sigma_{sy}(T) = \sigma(A^{\frac{1}{2}}B^{\frac{1}{2}}) = \{1\} \cup \{ \left( 1 + \frac{1}{n} \right)^{\frac{1}{2}}, n \in \mathbb{N} \}$. Note that here $\sigma_{ess}(A^{\frac{1}{2}}B^{\frac{1}{2}}) = \{1\}$ which is connected and hence the symplectic spectrum can be approximated using the truncation method.
	\end{example}
	
	\begin{example}
		Define matrices $\tilde{A}$ and $\tilde{B}$ as follows.
		$$\tilde{A} = \begin{bmatrix}
			4 & 2 \\
			2 & 2
		\end{bmatrix}, \qquad \tilde{B} = \begin{bmatrix}
			2 & 1 \\
			1 & 1
		\end{bmatrix}.$$ Then $\tilde{A}$ and $\tilde{B}$ are positive definite matrices that commute. Now define operators $A_1$ and $B_1$ on the real Hilbert space $l^2$ as follows.
		\begin{align*}
			A_1 &= \tilde{A} \oplus \tilde{A} \oplus \cdots, \\
			B_1 &= \tilde{B} \oplus \tilde{B} \oplus \cdots.
		\end{align*} Then the operators $\tilde{A}$ and $\tilde{B}$ are positive invertible operators on $l^2$ that commute. Now define operators $A = A_1^2$ and $B = B_1^2$ on $l^2$, that is 
		\begin{align*}
			A = A_1^2 &= \tilde{A}^2 \oplus \tilde{A}^2 \oplus \cdots, \\
			B = B_1^2 &= \tilde{B}^2 \oplus \tilde{B}^2 \oplus \cdots.
		\end{align*} By commutativity and positivity of $\tilde{A}$ and $\tilde{B}$, we have the same for operators $A$ and $B$ respectively. Now define $T$ on $l^2 \oplus l^2$ as $$T = \begin{bmatrix}
			A & 0 \\
			0 & B
		\end{bmatrix}.$$ Then $T$ satisfies the conditions given in Theorem \ref{rstAB}. Also we have \begin{align*}
			A^{\frac{1}{2}}B^{\frac{1}{2}} &= A_1B_1 \\
			&= \begin{bmatrix}
				26 & 16 \\
				16 & 10
			\end{bmatrix} \oplus \begin{bmatrix}
				26 & 16 \\
				16 & 10
			\end{bmatrix} \oplus \cdots.
		\end{align*} Therefore $\sigma_{sy}(T) = \sigma(A^{\frac{1}{2}}B^{\frac{1}{2}}) = \{18 \pm 8\sqrt{5}\}$.
	\end{example}
	
	Recall that symplectic equivalence is a concept that arises in the field of symplectic geometry. We define symplectic equivalence as the relationship between two operators and observe that the symplectic spectrum remains invariant under this equivalence.
	
	\begin{definition}
		Let $\mathcal{K}$ be a real Hilbert space, $A$ and $B$ be operators on $\mathcal{K} \oplus \mathcal{K}.$ Then $A$ and $B$ are said to be \textbf{symplectically equivalent} if there exists a symplectic transformation $L : \mathcal{K} \oplus \mathcal{K} \rightarrow \mathcal{K} \oplus \mathcal{K}$ such that $A = L^{T}BL.$ 
	\end{definition}
	
	\begin{remark}
		Consider $\mathcal{H}, T, D$ as in Theorem \ref{wnfifd}. Then Theorem \ref{wnfifd} asserts that  the operators $T$ and $\begin{bmatrix}
			D & 0 \\
			0 & D
		\end{bmatrix}$ on $\mathcal{H} \oplus \mathcal{H}$ are symplectically equivalent through the symplectic transformation $M$.  
	\end{remark}
	
	The next theorem shows that positive invertible operators, which are symplectically equivalent, will have the same symplectic spectrum. This theorem plays an important role in the proof of Theorem \ref{rstABBA}.
	\begin{theorem} \label{rstsymeq}
		Let $A$, $B$ be strictly positive invertible operators on $\mathcal{H} \oplus \mathcal{H}$ such that $A$ and $B$ are symplectically equivalent. Then $\sigma_{sy}(A) = \sigma_{sy}(B).$
	\end{theorem}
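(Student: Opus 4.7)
The plan is to reduce the statement directly to the uniqueness clause of the infinite-dimensional Williamson normal form (Theorem \ref{wnfifd}). Since $B$ is strictly positive invertible on $\mathcal{H} \oplus \mathcal{H}$, Theorem \ref{wnfifd} supplies a positive invertible operator $D$ on $\mathcal{H}$ and a symplectic transformation $M : \mathcal{H} \oplus \mathcal{H} \to \mathcal{H} \oplus \mathcal{H}$ with
\[
B = M^{T}\begin{bmatrix} D & 0 \\ 0 & D \end{bmatrix} M,
\qquad \sigma_{sy}(B) = \sigma(D).
\]

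Next, I would use the symplectic equivalence $A = L^{T} B L$ together with the associativity of transpose to rewrite
\[
A = L^{T} M^{T} \begin{bmatrix} D & 0 \\ 0 & D \end{bmatrix} M L = (ML)^{T}\begin{bmatrix} D & 0 \\ 0 & D \end{bmatrix} (ML).
\]
By Remark \ref{symcompo}, the composition $ML$ of two symplectic transformations is again symplectic. Hence the displayed identity is itself a Williamson normal form for $A$, with the same block-diagonal piece $D$.

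Now I invoke the uniqueness clause of Theorem \ref{wnfifd}: the operator $D$ appearing in the Williamson normal form of $A$ is unique up to conjugation by an orthogonal transformation of $\mathcal{H}$. Conjugation by an orthogonal operator preserves the spectrum, so $\sigma_{sy}(A)$, defined as the spectrum of this $D$, coincides with $\sigma(D) = \sigma_{sy}(B)$, which is what we wanted.

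I do not anticipate a substantial obstacle here; the entire content of the argument is the observation that symplectic transformations compose to symplectic transformations (Remark \ref{symcompo}), combined with the uniqueness part of Theorem \ref{wnfifd}. The only small care needed is to confirm that the $D$ produced by applying Theorem \ref{wnfifd} to $B$ is \emph{the same} $D$ that witnesses the Williamson form of $A$ after pre- and post-multiplying by $L$; this is automatic from the computation above and does not require any new estimate or functional-analytic input.
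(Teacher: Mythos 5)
Your argument is exactly the paper's own: apply Williamson's normal form to $B$, pull the symplectic $L$ from the equivalence $A = L^T B L$ into the factorization to get $A = (ML)^T \begin{bmatrix} D & 0 \\ 0 & D \end{bmatrix}(ML)$, and cite Remark \ref{symcompo} to see that $ML$ is symplectic. The only cosmetic difference is that you spell out why uniqueness up to orthogonal conjugation preserves the spectrum, whereas the paper leaves that implicit.
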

	
	\begin{proof}
		Since the operators $A$ and $B$ are symplectically equivalent, there exist a symplectic transformation $L : \mathcal{H} \oplus \mathcal{H} \rightarrow \mathcal{H} \oplus \mathcal{H}$ such that $A = L^{T}BL.$ Since, $B$ is positive invertible on $\mathcal{H} \oplus \mathcal{H}$, by Theorem \ref{wnfifd}, there exists a symplectic transformation say $M$ and a positive invertible operator $D$ such that
		$$ B = M^{T}
		\begin{bmatrix}
			D & 0 \\
			0 & D
		\end{bmatrix}M,$$ and $\sigma_{sy}(B) = \sigma(D).$ Define $K = ML$. Then $K$ is a symplectic transformation as the composition of two symplectic transformations is again symplectic (by Remark \ref{symcompo}). Therefore, 
		$$A = L^{T}BL = L^{T}M^{T}
		\begin{bmatrix}
			D & 0 \\
			0 & D			
		\end{bmatrix}ML = K^{T}
		\begin{bmatrix}
			D & 0 \\
			0 & D			
		\end{bmatrix}K.$$ That is, $\sigma_{sy}(A) = \sigma(D) = \sigma_{sy}(B).$
	\end{proof}
	Now we state the final result of this section, which considers the class $\mathbb{B}$ operators.
	\begin{theorem} \label{rstABBA}
		Let $A$, $B$ be self-adjoint operators on $\mathcal{H}$ such that the operators $A + B$ and $A - B$ are positive invertible and commuting. Let $T : \mathcal{H} \oplus \mathcal{H} \rightarrow \mathcal{H} \oplus \mathcal{H}$ be defined as $$T = 
		\begin{bmatrix}
			A & B \\
			B & A
		\end{bmatrix}.$$ Then, $\sigma_{sy}(T) = \sigma((A + B)^{\frac{1}{2}}(A - B)^{\frac{1}{2}})$. In particular, if the essential spectrum of $(A + B)^{\frac{1}{2}}(A - B)^{\frac{1}{2}}$ is connected, then the symplectic spectrum can be approximated using the truncation method.
	\end{theorem}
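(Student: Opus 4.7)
The plan is to reduce this theorem to Theorem \ref{rstAB} via a symplectic change of variables that block-diagonalizes the off-diagonal form of $T$. Concretely, I want to find a symplectic transformation that conjugates $T$ to the class-$\mathbb{A}$ operator $\begin{bmatrix} A+B & 0 \\ 0 & A-B \end{bmatrix}$, and then invoke Theorem \ref{rstsymeq} together with Theorem \ref{rstAB}.

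First, I would introduce the candidate
\[
L \;=\; \frac{1}{\sqrt{2}}\begin{bmatrix} I & -I \\ I & I \end{bmatrix} \colon \mathcal{H}\oplus\mathcal{H}\to\mathcal{H}\oplus\mathcal{H},
\]
which is a bounded invertible operator with $L^T=\tfrac{1}{\sqrt{2}}\begin{bmatrix} I & I \\ -I & I \end{bmatrix}$. I would then verify two direct $2\times 2$ block computations: (i) $L^T J L = J$, so $L$ is symplectic; and (ii)
\[
L^T T L \;=\; \begin{bmatrix} A+B & 0 \\ 0 & A-B \end{bmatrix}.
\]
Both are short matrix multiplications using the self-adjointness of $A$ and $B$ and the identity $(A\pm B)^T = A\pm B$; the cross terms in (ii) cancel exactly because of the symmetric structure of $T$.

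Next, I would observe that the right-hand side of (ii) is strictly positive invertible (since $A+B$ and $A-B$ are assumed to be so on their respective summands), and hence $T = (L^{-1})^T (L^T T L) L^{-1}$ is also strictly positive invertible, so $\sigma_{sy}(T)$ is well-defined in the sense of Theorem \ref{wnfifd}. By the definition of symplectic equivalence, $T$ and $\begin{bmatrix} A+B & 0 \\ 0 & A-B \end{bmatrix}$ are symplectically equivalent, so Theorem \ref{rstsymeq} yields
\[
\sigma_{sy}(T) \;=\; \sigma_{sy}\!\left(\begin{bmatrix} A+B & 0 \\ 0 & A-B \end{bmatrix}\right).
\]
Because $A+B$ and $A-B$ are positive invertible and commuting, Theorem \ref{rstAB} applies to the right-hand side with $A+B$ playing the role of $A$ and $A-B$ that of $B$, giving $\sigma_{sy}(T) = \sigma\bigl((A+B)^{1/2}(A-B)^{1/2}\bigr)$.

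Finally, the ``in particular'' part will follow for free: if $\sigma_{ess}\bigl((A+B)^{1/2}(A-B)^{1/2}\bigr)$ is connected, then the conclusion of Theorem \ref{rstAB} (which in turn uses Theorem \ref{bcnthm}) immediately delivers the truncation-method approximation $\textrm{lim inf } \sigma((A+B)^{1/2}(A-B)^{1/2})_n = \textrm{lim sup }\sigma((A+B)^{1/2}(A-B)^{1/2})_n = \sigma_{sy}(T)$. The main (and only real) obstacle is guessing the correct $L$; once the ansatz above is written down, every remaining step is either a block-matrix calculation or a direct citation of the previously established results (Theorems \ref{rstsymeq}, \ref{rstAB}, and \ref{bcnthm}).
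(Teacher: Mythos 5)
Your proposal matches the paper's own proof step for step: the same choice of $L = \tfrac{1}{\sqrt{2}}\begin{bmatrix} I & -I \\ I & I \end{bmatrix}$, the same verifications $L^T J L = J$ and $L^T T L = \begin{bmatrix} A+B & 0 \\ 0 & A-B \end{bmatrix}$, followed by Theorems \ref{rstsymeq} and \ref{rstAB}. The only extra content (pointing out that $T$ itself is strictly positive invertible, hence $\sigma_{sy}(T)$ is well-defined) is a harmless small addition; otherwise the argument is identical.
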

	
	\begin{proof}
		Define a new operator $L : \mathcal{H} \oplus \mathcal{H} \rightarrow \mathcal{H} \oplus \mathcal{H}$ as $L = \frac{1}{\sqrt{2}}
		\begin{bmatrix}
			I & -I \\
			I & I
		\end{bmatrix}$, where $I$ is the identity operator on $\mathcal{H}$. Then 
		
		\begin{align*}
			L^TJL &= \left(\frac{1}{\sqrt{2}}
			\begin{bmatrix}
				I & -I \\
				I & I
			\end{bmatrix}\right)^T 
			\begin{bmatrix}
				0 & I \\
				-I & 0
			\end{bmatrix}\left(\frac{1}{\sqrt{2}}
			\begin{bmatrix}
				I & -I \\
				I & I
			\end{bmatrix}\right) \\
			&= \frac{1}{2}
			\begin{bmatrix}
				I & I \\
				-I & I
			\end{bmatrix}
			\begin{bmatrix}
				0 & I \\
				-I & 0
			\end{bmatrix}
			\begin{bmatrix}
				I & -I \\
				I & I
			\end{bmatrix} \\
			&= \frac{1}{2}
			\begin{bmatrix}
				-I & I \\
				-I & -I
			\end{bmatrix}
			\begin{bmatrix}
				I & -I \\
				I & I
			\end{bmatrix} \\
			&= \frac{1}{2}
			\begin{bmatrix}
				0 & 2I \\
				-2I & 0\\
			\end{bmatrix} 
			= J,
		\end{align*}
		that is $L$ is a symplectic transformation. 
		
		\begin{align*}
			\textrm{Also, }	T^\prime &= L^TTL \\
			&= \left(\frac{1}{\sqrt{2}}
			\begin{bmatrix}
				I & -I \\
				I & I
			\end{bmatrix}\right)^T
			\begin{bmatrix}
				A & B \\
				B & A 			\end{bmatrix}\left(\frac{1}{\sqrt{2}}
			\begin{bmatrix}
				I & -I \\
				I & I
			\end{bmatrix}\right) \\
			&= \frac{1}{2}
			\begin{bmatrix}
				I & I \\
				-I & I
			\end{bmatrix}
			\begin{bmatrix}
				A & B \\
				B & A
			\end{bmatrix}
			\begin{bmatrix}
				I & -I \\
				I & I
			\end{bmatrix} \\
			&= \frac{1}{2}
			\begin{bmatrix}
				A + B & A + B \\
				-A + B & A - B
			\end{bmatrix}
			\begin{bmatrix}
				I & -I \\
				I & I
			\end{bmatrix}\\
			&= \begin{bmatrix}
				A + B & 0 \\
				0 & A - B
			\end{bmatrix},
		\end{align*}
		
		that is $T$ and $T^{\prime}$ are symplectically equivalent. Hence, by Theorem \ref{rstsymeq}, $T$ and $T^{\prime}$ have the same symplectic spectrum. Hence by Theorem \ref{rstAB}, we have 
		$$\sigma_{sy}(T) = \sigma_{sy}(T^{\prime}) = \sigma((A + B)^{\frac{1}{2}}(A - B)^{\frac{1}{2}}).$$ Now suppose if $\sigma_{ess}((A + B)^{\frac{1}{2}}(A - B)^{\frac{1}{2}})$ is connected, then by Theorem \ref{rstAB} we have 
		$$\sigma_{sy}(T) = \textrm{ lim inf } \sigma(((A + B)^{\frac{1}{2}}(A - B)^{\frac{1}{2}})_n) = \textrm{ lim sup } \sigma(((A + B)^{\frac{1}{2}}(A - B)^{\frac{1}{2}})_n),$$ that is the symplectic spectrum can be approximated using finite dimesnional truncations.
	\end{proof}
	
	\begin{remark} \label{remarkonL}
		Note that the operator $L$ considered in the proof of Theorem \ref{rstABBA} is an orthognal transformation on $\mathcal{H} \oplus \mathcal{H}$. This can be seen as follows.
		\begin{align*}
			L^TL &= \left(\frac{1}{\sqrt{2}}
			\begin{bmatrix}
				I & -I \\
				I & I
			\end{bmatrix}\right)^T \left(\frac{1}{\sqrt{2}}
			\begin{bmatrix}
				I & -I \\
				I & I
			\end{bmatrix}\right) \\
			&= \frac{1}{2} \begin{bmatrix}
				I & I \\
				-I & I
			\end{bmatrix} \begin{bmatrix}
				I & -I \\
				I & I
			\end{bmatrix} \\
			&= \frac{1}{2} \begin{bmatrix}
				2I & 0 \\
				0 & 2I
			\end{bmatrix}  \\
			&= \begin{bmatrix}
				I & 0 \\
				0 & I
			\end{bmatrix} = I.
		\end{align*} Similarly we have $LL^T = I$. Hence $L$ is an orthogonal transformation on $\mathcal{H} \oplus \mathcal{H}$.
	\end{remark}
	
	\begin{example}
		Let $\mathcal{H} = l^2$. Let $A$ and $B$ be operators on $\mathcal{H}$ given with the matrix representations (with respect to some orthonormal basis)
		
		$$
		A = \begin{bmatrix}
			6 & 3 & \frac{1}{2} & 0 & 0 & 0 & 0 & \cdots \\
			3 & 6 & 3 & \frac{1}{2} & 0 & 0 & 0 & \cdots \\
			\frac{1}{2} & 3 & 6 & 3 & \frac{1}{2} & 0 & 0 & \cdots \\
			0 & \frac{1}{2} & 3 & 6 & 3 & \frac{1}{2} & 0 & \cdots \\
			\cdots & \cdots & \cdots & \cdots & \cdots & \cdots & \cdots & \cdots\\
		\end{bmatrix}
		$$
		$$
		B = \begin{bmatrix}
			5 & 3 & \frac{1}{2} & 0 & 0 & 0 & 0 & \cdots \\
			3 & 5 & 3 & \frac{1}{2} & 0 & 0 & 0 & \cdots \\
			\frac{1}{2} & 3 & 5 & 3 & \frac{1}{2} & 0 & 0 & \cdots \\
			0 & \frac{1}{2} & 3 & 5 & 3 & \frac{1}{2} & 0 & \cdots \\
			\cdots & \cdots & \cdots & \cdots & \cdots & \cdots & \cdots & \cdots\\
		\end{bmatrix}
		$$
		That is $A$ is the Toeplitz operator corresponding to the symbol $6 + 6cos(t) + cos (2t) \in L^\infty(\mathbb{T})$ and $B$ is the Toeplitz operator corresponding to the symbol $5 + 6cos(t) + cos (2t) \in L^\infty(\mathbb{T})$. So $A$ and $B$ are self-adjoint. Also, 
		$$
		A + B = \begin{bmatrix}
			11 & 6 & 1 & 0 & 0 & 0 & 0 & \cdots \\
			6 & 11 & 6 & 1 & 0 & 0 & 0 & \cdots \\
			1 & 6 & 11 & 6 & 1 & 0 & 0 & \cdots \\
			0 & 1 & 6 & 11 & 6 & 1 & 0 & \cdots \\
			\cdots & \cdots & \cdots & \cdots & \cdots & \cdots & \cdots & \cdots\\
		\end{bmatrix}
		$$ and $A - B$ is the identity operator. Hence $A + B$ and $A - B$ commutes. Therefore $A$ and $B$ satisfies the conditions in Theorem \ref{rstABBA}. Now define $T: l^2 \oplus l^2 \rightarrow l^2 \oplus l^2$ as $T = \begin{bmatrix}
			A & B \\
			B & A 
		\end{bmatrix}$. From Theorem \ref{rstABBA}, $$\sigma_{sy}(T) = \sigma((A + B)^{\frac{1}{2}}(A - B)^{\frac{1}{2}}) = \sigma((A + B)^{\frac{1}{2}}) = \sigma(A + B)^{\frac{1}{2}}.$$ $A + B$ being a Toeplitz operator corresponding to the symbol $f(t) = 11 + 12 cos(t) + 2 cos(2t)$, $\sigma(A + B)$ is the essential range of $f$, which is the interval $[1,25]$. Therefore, $\sigma_{sy}(T) = \sigma(A + B)^{\frac{1}{2}} = [1,5].$ Also since $\sigma_{ess}((A + B)^{\frac{1}{2}}(A - B)^{\frac{1}{2}}) = [1,5]$ (being a Toeplitz operator) is connected, the symplectic spectrum can be approximated using the truncation method.
	\end{example}
	
	\begin{example}
		Consider the positive invertible operator $T$ on $\mathcal{H} \oplus \mathcal{H}$ defined as
		$$T = \begin{bmatrix}
			A & B \\
			B & A
		\end{bmatrix},$$ where $A$ and $B$ are self-adjoint operators given by \begin{align*}
			A &= \textrm{ diag }\left\{2 + \frac{1}{n^2} + \frac{1}{n^3}: n \in \mathbb{N}\right\}, \\ 
			B &= \textrm{ diag }\left\{\frac{1}{n^2} - \frac{1}{n^3}: n \in \mathbb{N}\right\}.
		\end{align*} Now $A + B = \left\{ 2 + \frac{2}{n^2}: n \in \mathbb{N}\right\}$ and $A - B = \left\{ 2 + \frac{2}{n^3}: n \in \mathbb{N} \right\}$, that is $A + B$ and $A - B$ are positive invertible operators and being diagonal they commutes. Hence $A$ and $B$ satisfies the conditions given in Theorem \ref{rstABBA}. Now
		$$T^\prime = 
		\begin{bmatrix}
			A-B & 0 \\
			0 & A+B
		\end{bmatrix}. $$ Hence by Theorem \ref{rstABBA} \begin{align*}
			\sigma_{sy}(T) = \sigma_{sy}(T^\prime) &= \sigma(((A + B)^{\frac{1}{2}}(A - B)^{\frac{1}{2}})) \\
			&= \{2\} \cup \left\{ \left[ \left( 2 + \frac{2}{n^2}\right)\left( 2 + \frac{2}{n^3}\right)\right]^{\frac{1}{2}} : n \in \mathbb{N} \right\}.
		\end{align*} 
		
	\end{example}
	
	In Section \ref{secnumillus}, we give a numerical example to illustrate the truncation method discussed above.

	\section{Bounds for the Symplectic Spectrum}
	A consequence of Theorem $11$\cite{rbt01} is that the symplectic eigenvalues of a positive definite real matrix of even order lie between the smallest and the largest eigenvalues. In this section, we prove the same result for the two classes of operators considered in this article. We demonstrate that for the operators in Class $\mathbb{A}$ and Class $\mathbb{B}$, the symplectic spectrum falls within the bounds of the spectrum. The main tools used to accomplish this are the results from the real Banach algebras discussed in Section  \ref{realb}.

	\begin{theorem} \label{rstinclu}
		Let $T$ be an operator in Class $\mathbb{A}$ or Class $\mathbb{B}$. Then the symplectic spectral values of $T$ lies in $[m, M]$, where $m = \textrm{ inf }\sigma(T)$, $M = \textrm{ sup }\sigma(T)$. 
	\end{theorem}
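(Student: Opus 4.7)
The plan is to prove the bound for Class $\mathbb{A}$ first by combining the explicit symplectic spectrum formula from Theorem \ref{rstAB} with the spectral inclusion from Theorem \ref{thminclu}, and then to reduce Class $\mathbb{B}$ to Class $\mathbb{A}$ via the orthogonal symplectic transformation introduced in the proof of Theorem \ref{rstABBA}.

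For $T = \begin{bmatrix} A & 0 \\ 0 & B \end{bmatrix}$ in Class $\mathbb{A}$, I would first observe that $\sigma(T) = \sigma(A) \cup \sigma(B)$, so $m = \min\{\inf\sigma(A), \inf\sigma(B)\}$ and $M = \max\{\sup\sigma(A), \sup\sigma(B)\}$. Theorem \ref{rstAB} gives $\sigma_{sy}(T) = \sigma(A^{1/2} B^{1/2})$, and commutativity of $A, B$ transfers to their positive square roots. Working inside the commutative real Banach subalgebra $\Gamma(\Gamma(\{A^{1/2}, B^{1/2}\}))$ of $B(\mathcal{H})$, where (by the theorem preceding Theorem \ref{thminclu}) the spectra agree with those taken in $B(\mathcal{H})$, Theorem \ref{thminclu} yields $\sigma(A^{1/2} B^{1/2}) \subset \sigma(A^{1/2}) \cdot \sigma(B^{1/2})$. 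By the continuous functional calculus for positive self-adjoint operators, $\sigma(A^{1/2}) = \{\sqrt{\lambda} : \lambda \in \sigma(A)\}$ and likewise for $B^{1/2}$. Hence any element of $\sigma(A^{1/2} B^{1/2})$ has the form $\sqrt{\alpha \beta}$ with $\alpha \in \sigma(A)$, $\beta \in \sigma(B)$, and the inequality $m^{2} \le \alpha \beta \le M^{2}$ (using $\alpha, \beta \in [m, M]$ and positivity of $m$) gives $\sqrt{\alpha \beta} \in [m, M]$, which establishes the claim for Class $\mathbb{A}$.

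For Class $\mathbb{B}$, I would invoke the orthogonal symplectic $L = \tfrac{1}{\sqrt{2}} \begin{bmatrix} I & -I \\ I & I \end{bmatrix}$ from the proof of Theorem \ref{rstABBA}, for which $T' = L^{T} T L = \begin{bmatrix} A + B & 0 \\ 0 & A - B \end{bmatrix}$ belongs to Class $\mathbb{A}$. Since $L$ is symplectic, Theorem \ref{rstsymeq} gives $\sigma_{sy}(T) = \sigma_{sy}(T')$. Because $L$ is \emph{also} orthogonal (Remark \ref{remarkonL}), the conjugation preserves the ordinary spectrum, so $\sigma(T) = \sigma(T')$ and the bounds $m, M$ are unchanged. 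Applying the Class $\mathbb{A}$ conclusion already established to $T'$ then completes the proof.

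The main conceptual step is the passage from the abstract real Banach algebra spectral inclusion of Theorem \ref{thminclu} to a concrete estimate on positive operators: one needs $\sigma(A^{1/2})$ and $\sigma(B^{1/2})$ to be interpreted as the usual (real, nonnegative) operator spectra, which is legitimate because both operators are positive self-adjoint, so their real Banach algebra spectra coincide with their spectra in $B(\mathcal{H})$ and lie in $[0, \infty)$. Once this identification is made, everything reduces to the elementary monotonicity of $\sqrt{\cdot}$ on $[m, M]$, so I do not anticipate substantial technical obstacles beyond this verification.
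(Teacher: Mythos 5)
Your proposal is correct and follows essentially the same route as the paper: for Class $\mathbb{A}$ one uses $\sigma_{sy}(T)=\sigma(A^{1/2}B^{1/2})$ together with the spectral inclusion $\sigma(A^{1/2}B^{1/2})\subset\sigma(A^{1/2})\sigma(B^{1/2})$ from Theorem \ref{thminclu}, and Class $\mathbb{B}$ is reduced to Class $\mathbb{A}$ via the orthogonal symplectic transformation $L$. The only difference is that you spell out the passage to the commutative subalgebra $\Gamma(\Gamma(\{A^{1/2},B^{1/2}\}))$ needed to invoke Theorem \ref{thminclu}, a step the paper leaves implicit.
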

	
	\begin{proof}
		\noindent \underline{\textbf{Case I:}} $T$ is in Class $\mathbb{A}$.
		
		\noindent Here $\sigma(T) = \sigma(A) \cup \sigma(B).$ Since $T$ is positive, $m, M >0$. We have seen that $\sigma_{sy}(T) = \sigma(A^{\frac{1}{2}}B^{\frac{1}{2}})$. 
		
		\noindent Since, $\sigma(T) \subset [m,M]$, we have $ \sigma(A) \subset [m,M] \Rightarrow \sigma(A^{\frac{1}{2}}) = \sigma(A)^{\frac{1}{2}} \subset [m^{\frac{1}{2}}, M^{\frac{1}{2}}].$ Similarly, $\sigma(B^{\frac{1}{2}}) = \sigma(B)^{\frac{1}{2}} \subset [m^{\frac{1}{2}}, M^{\frac{1}{2}}].$
		
		\noindent From Theorem \ref{thminclu} we have, $\sigma_{sy}(T) = \sigma(A^{\frac{1}{2}}B^{\frac{1}{2}}) \subset \sigma(A^{\frac{1}{2}}) \sigma(B^{\frac{1}{2}}) \subset [m, M],$ that is the symplectic spectral values lies in $[m, M]$.  
		
		\noindent \underline{\textbf{Case II:}} $T$ is in Class $\mathbb{B}$.
		
		\noindent When this case was considered, we showed that $T$ is symplectically equivalent to the operator given by $T^{\prime} = \begin{bmatrix}
			A + B & 0 \\
			0 & A - B
		\end{bmatrix},$ by the symplectic and orthogonal transformation $L = \frac{1}{\sqrt{2}}\begin{bmatrix}
			I & -I \\
			I & I		
		\end{bmatrix}$.  Hence $\sigma(T) = \sigma(T^{\prime}).$ Now proceeding as in Case II, we have
		$$\sigma_{sy}(T) = \sigma((A + B)^{\frac{1}{2}}(A - B)^{\frac{1}{2}}) \subset \sigma((A + B)^{\frac{1}{2}}) \sigma((A - B)^{\frac{1}{2}}) \subset [m,M].$$
	\end{proof}
	
	\begin{example} \label{examplesyminclu}
		Let $H = l^2$, $A$ and $B$ be positive invertible operators on $l^2$ defined as follows: let $a = 2 + cos(t) \in L^\infty(\mathbb{T}) $. Then the Toeplitz matrix
		$$ A = 
		\begin{bmatrix}
			2 & \frac{1}{2} & 0 & 0 & \cdots \\
			\frac{1}{2} & 2 & \frac{1}{2} & 0 & \cdots \\
			0 & \frac{1}{2} & 2 & \frac{1}{2} & \cdots \\
			\cdots & \cdots & \cdots & \cdots & \cdots 
		\end{bmatrix}
		$$ will define a positive invertible operator on $l^2$. Similarly define the operator $B$ on $l^2$ by the Toeplitz matrix  
		$$ B = 
		\begin{bmatrix}
			2 & -\frac{1}{2} & 0 & 0 & \cdots \\
			-\frac{1}{2} & 2 & -\frac{1}{2} & 0 & \cdots \\
			0 & -\frac{1}{2} & 2 & -\frac{1}{2} & \cdots \\
			\cdots & \cdots & \cdots & \cdots & \cdots 
		\end{bmatrix}$$ which corresponds to the symbol $b = 2 - cos(t) \in L^\infty(\mathbb{T})$. Then, $\sigma(A) = \sigma(B) = [1,3]$. Now define $T: l^2 \oplus l^2 \rightarrow l^2 \oplus l^2$ by $T = \begin{bmatrix}
			A & 0 \\
			0 & B
		\end{bmatrix}$. Then $\sigma(T) = \sigma(A) \cup \sigma(B) = [1,3].$ Hence by Theorem \ref{rstinclu}, any real $r \in \mathbb{R} \setminus [1,3]$ will not be a symplectic spectral value of the operator $T$.		
	\end{example}

	\section{Application to Gaussian Covariance Operators} \label{sectionapplicationstogco}
	
	In this section we derive the conditions for operators in Class $\mathbb{A}$ and Class $\mathbb{B}$ to be a  Gaussian Covariance Operator (GCO). Later we shall show that the symplectic spectrum of a GCO can be completely retrieved using the truncation method described in \cite{bcn01}. Recall that the Gaussian Covariance Operators are the covariance operators associated with infinte mode Gaussian quantum states. A characterisation for these operators given in \cite{bv02} will be used in this article.
	
	First let us recall the definitions of complexification of a real Hilbert space and complexification of an operator.
	
	\begin{definition} \label{defncomplexification} \cite{bv01}
		By the complexification of a real Hilbert space $\mathcal{K}$ we mean the complex Hilbert space $\hat{\mathcal{K}} = \mathcal{K} + i \cdot \mathcal{K} = \{x + i \cdot y : x,y \in \mathcal{K} \}$ with addition, complex-scalar product and inner product defined in the obvious way. For a bounded operator $T$ on the real Hilbert space $\mathcal{K}$, define an operator $\hat{T}$ on the complexification $\hat{\mathcal{K}}$ of $\mathcal{K}$ by $\hat{T}(x + i \cdot y) = Tx + i \cdot Ty$. $\hat{T}$ is called the complexification of $T$. Define the spectrum of $T$, denoted by $\sigma(T)$, to be the spectrum of $\hat{T}$, that is the spectrum of an operator and its complexification is the same.
	\end{definition}
	
	\begin{remark} \label{remarkcomplexificationprop}
		We make the following observations on the complexification of operators.
		\begin{enumerate}
			\item For operators $T_1$ and $T_2$ on $\mathcal{K}$, $\widehat{T_1T_2} = \hat{T_1}\hat{T_2}$. This can be seen as follows. Let $x = a + ib \in \hat{\mathcal{K}}$, where $a,b \in \mathcal{K}$. Then 
			\begin{align*}
				\widehat{T_1T_2}(x) &= \widehat{T_1T_2}(a+ib) = (T_1T_2)(a) + i(T_1T_2)(b) \\
				&= T_1(T_2(a)) + iT_1(T_2(b)) \\
				&= \hat{T_1}(T_2(a) +iT_2(b)) \\
				&= \hat{T_1}(\hat{T_2}(a + ib)) = \hat{T_1}(\hat{T_2}(x)), \quad \forall x \in \hat{\mathcal{K}}.
			\end{align*} Therefore $\widehat{T_1T_2} = \hat{T_1}\hat{T_2}.$
			
			\item For an operator $T$ on $\mathcal{K}$, $(\hat{T})^* = \widehat{(T^T)}.$ This follows from Definition 2.3\cite{bv01}. Hence $\hat{T}$ is Hermitian if and only if $T$ is self-adjoint.
		\end{enumerate}
	\end{remark}
	
	Now we define Gaussian Covariance Operators.
	\begin{definition} [Gaussian Covariance Operators \cite{bv02}]
		\label{defgco}
		Let $S$ be a real linear, bounded, symmetric and invertible operator on $\mathcal{H} \oplus \mathcal{H}$. Then $S$ is called a Gaussian Covariance Operator (GCO) if the following three conditions are satisfied.
		\begin{enumerate}
			\item $\hat{S} - i\hat{J} \geq 0$, where $\hat{S}$ and $\hat{J}$ are the complexification of the operators $S$ and $J$ respectively, 
			\item $S - I$ is Hilbert-Schmidt,
			\item $(\sqrt{S}J\sqrt{S})^T(\sqrt{S}J\sqrt{S}) - I$ is of trace class.
		\end{enumerate}		
	\end{definition}
	
	\begin{remark} \label{remarkcond3}
		The third condition in Definition \ref{defgco} can be replaced by the condition $(JS)^2 + I$ is of trace class \cite[Corollary 3.3.1]{john2018infinite}.
	\end{remark}

	\subsection{Conditions for operators in Class $\mathbb{A}$ and Class $\mathbb{B}$ to be a GCO}\label{subseccondGCO}
	
	In this section, we shall formulate the conditions under which the operators in Class $\mathbb{A}$ and Class $\mathbb{B}$ becomes a GCO.
	
	\begin{theorem} \label{thmgcoinclassA}
		Let $S$ be an operator in Class $\mathbb{A}$. Then $S$ is a GCO if and only if
		\begin{enumerate}
			\item $\hat{A} \geq \hat{B}^{-1}$ on $\hat{\mathcal{H}}$,
			\item $A - I$ and $B - I$ are Hilbert-Schmidt operators on $\mathcal{H}$,
			\item $AB - I$ is a trace class operator on $\mathcal{H}$.
		\end{enumerate}
	\end{theorem}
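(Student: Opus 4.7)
The plan is to translate each of the three GCO axioms in Definition \ref{defgco} into an equivalent statement on the diagonal blocks $A$ and $B$, using the block form $S = \begin{bmatrix} A & 0 \\ 0 & B \end{bmatrix}$ and the commutativity $AB=BA$. The operator $S$ is automatically bounded, symmetric and invertible since $A$ and $B$ are positive invertible and self-adjoint; so only the three listed conditions need to be matched.

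For the first GCO condition $\hat S - i\hat J \ge 0$, I would compute
\begin{equation*}
\hat S - i\hat J \;=\; \begin{bmatrix} \hat A & -iI \\ iI & \hat B \end{bmatrix}
\end{equation*}
on $\hat{\mathcal H}\oplus\hat{\mathcal H}$, and then invoke the Schur complement criterion for self-adjoint block operators: since $\hat B$ is strictly positive and invertible, the block matrix is positive semidefinite if and only if $\hat A - (-iI)\hat B^{-1}(iI) = \hat A - \hat B^{-1} \ge 0$. This gives condition (1) of the theorem. For the second GCO axiom, I simply note that $S - I$ is the block-diagonal operator $\operatorname{diag}(A-I,\,B-I)$, whose Hilbert--Schmidt norm squared is $\|A-I\|_{HS}^2 + \|B-I\|_{HS}^2$; so $S-I$ is Hilbert--Schmidt iff both $A-I$ and $B-I$ are, giving condition (2).

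For the third axiom, I would use Remark \ref{remarkcond3} to replace the condition by the requirement that $(JS)^2 + I$ be of trace class. A direct block computation yields
\begin{equation*}
JS \;=\; \begin{bmatrix} 0 & B \\ -A & 0 \end{bmatrix}, \qquad (JS)^2 \;=\; \begin{bmatrix} -BA & 0 \\ 0 & -AB \end{bmatrix},
\end{equation*}
and since $AB = BA$ this simplifies to $\operatorname{diag}(-AB,-AB)$. Hence $(JS)^2 + I = \operatorname{diag}(I-AB,\,I-AB)$, which is trace class iff $AB - I$ is trace class; this is condition (3).

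The main subtlety, and the only nontrivial step, is the Schur complement argument in the first bullet: one needs to ensure that the usual equivalence $\begin{bmatrix} P & Q \\ Q^* & R \end{bmatrix} \ge 0 \iff P - QR^{-1}Q^* \ge 0$ applies in the bounded-operator setting on a complex Hilbert space, with $R = \hat B$ boundedly invertible and strictly positive. Assuming this standard fact (which follows from the factorisation $\begin{bmatrix} I & QR^{-1} \\ 0 & I \end{bmatrix}\begin{bmatrix} P - QR^{-1}Q^* & 0 \\ 0 & R \end{bmatrix}\begin{bmatrix} I & 0 \\ R^{-1}Q^* & I \end{bmatrix}$), each of the three implications chains is a reversible equivalence, so the theorem is proved in both directions simultaneously.
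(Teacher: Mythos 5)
Your proposal is correct and matches the paper's proof essentially step for step: the same block computation of $\hat S - i\hat J$, the Schur-complement reduction (which the paper cites from Bhatia's Theorem 1.3.3 rather than refactoring explicitly), the block-diagonal Hilbert--Schmidt observation, and the replacement of the third GCO axiom via Remark~\ref{remarkcond3} followed by the computation of $(JS)^2$ using $AB=BA$. No differences of substance.
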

	
	\begin{proof}
		Given that the operator $S$ is in Class $\mathbb{A}$. Then $S$ takes the form 
		$S = \begin{bmatrix}
			A & 0 \\
			0 & B
		\end{bmatrix},$ where $A$ and $B$ are positive invertible operators on $\mathcal{H}$ such that $AB = BA.$ Now for $S$ to be a GCO, it should satisfy the conditions given in Definition \ref{defgco}. Let us see each of them separately. The first condition says that $\hat{S} - i\hat{J} \geq 0$. That is,
		\begin{align*}
			\hat{S} - i \hat{J} \geq 0 &\iff \begin{bmatrix}
				\hat{A} & 0 \\
				0 & \hat{B}
			\end{bmatrix} - i \begin{bmatrix}
				0 & \hat{I} \\
				-\hat{I} & 0
			\end{bmatrix} \geq 0 \\
			&\iff \begin{bmatrix}
				\hat{A} & 0 \\
				0 & \hat{B}
			\end{bmatrix} - i \begin{bmatrix}
				0 & I \\
				-I & 0
			\end{bmatrix} \geq 0 \\
			&\iff \begin{bmatrix}
				\hat{A} & -iI \\
				iI & \hat{B} 
			\end{bmatrix} \geq 0
		\end{align*} From Theorem 1.3.3\cite{bhatia2009positive}, we have
		$\begin{bmatrix}
			\hat{A} & -iI \\
			iI & \hat{B} 
		\end{bmatrix} \geq 0 \iff \hat{A} \geq (-iI)(\hat{B}^{-1})(iI).$
		Hence 
		$
		\hat{S} - i\hat{J} \geq 0 \textrm{ on } \widehat{\mathcal{H} \oplus \mathcal{H}} \iff \hat{A} \geq \hat{B}^{-1} \textrm{ on } \hat{\mathcal{H}}.
		$ \newline
		The second condition says that $S - I$ is Hilbert-Schmidt. 
		This happens when $S - I =
		\begin{bmatrix}
			A & 0 \\
			0 & B
		\end{bmatrix} - I = \begin{bmatrix}
			A - I & 0 \\
			0 & B - I
		\end{bmatrix}$ is Hilbert-Schmidt, that is, when $A - I$ and $B - I$ are Hilbert-Schmidt operators on $\mathcal{H}$. Hence the second condition holds if and only if 
		$A - I$ and $B - I$ are Hilbert-Schmidt operators on $\mathcal{H}$.	\\
		The third condition says that $(JS)^2 + I$ is of trace class. Now $$JS = \begin{bmatrix}
			0 & I \\
			-I & 0
		\end{bmatrix} \begin{bmatrix}
			A & 0 \\
			0 & B
		\end{bmatrix} = \begin{bmatrix}
			0 & B \\
			-A & 0
		\end{bmatrix}.$$
		$$\Rightarrow (JS)^2 = \begin{bmatrix}
			-BA & 0 \\
			0 & -AB
		\end{bmatrix} = \begin{bmatrix}
			-AB & 0 \\
			0 & -AB
		\end{bmatrix}.$$
		Then $(JS)^2 + I$ is of trace class if and only if
		$\begin{bmatrix}
			-AB & 0 \\
			0 & -AB
		\end{bmatrix} + I = \begin{bmatrix}
			-AB + I & 0 \\
			0 & -AB +I
		\end{bmatrix}$ is of trace class. That is, 
		$AB - I$ is of trace class on $\mathcal{H}$.	\end{proof}
	
	Before going to the next theorem we show that if two operators $S$ and $R$ are symplectically and orthogonally equivalent, then $S$ is a GCO if and only if $R$ is a GCO.
	
	\begin{lemma} \label{lemmaconditionongco}
		Let $S$ and $R$ be two operators on $\mathcal{H} \oplus \mathcal{H}$ such that $R = M^TSM$ for some symplectic and orthogonal operator $M$ on $\mathcal{H} \oplus \mathcal{H}$. Then $S$ is a GCO if and only if $R$ is a GCO.
	\end{lemma}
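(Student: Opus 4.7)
The plan is to verify each of the three defining conditions of a GCO individually, showing that each is preserved when conjugating by an operator $M$ that is simultaneously symplectic and orthogonal. The two hypotheses on $M$ work in tandem: symplecticity ($M^T J M = J$) takes care of the $\hat{J}$-dependent condition, while orthogonality ($M^T M = M M^T = I$) both identifies $M^T$ with $M^{-1}$ (allowing cancellations in the other conditions) and forces the complexification $\hat{M}$ to be unitary on $\hat{\mathcal{H} \oplus \mathcal{H}}$.

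First I would handle condition (1). Using Remark \ref{remarkcomplexificationprop}(1), $\hat{R} = \widehat{M^T}\,\hat{S}\,\hat{M}$, and applying the same identity to the symplectic relation $M^T J M = J$ gives $\hat{J} = \widehat{M^T}\,\hat{J}\,\hat{M}$. Subtracting,
\begin{equation*}
\hat{R} - i\hat{J} \;=\; \widehat{M^T}\,(\hat{S} - i\hat{J})\,\hat{M}.
\end{equation*}
By Remark \ref{remarkcomplexificationprop}(2), $\widehat{M^T} = (\hat{M})^{*}$, and orthogonality of $M$ translates to $\hat{M}^{*}\hat{M} = \hat{M}\hat{M}^{*} = I$ on the complexification. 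Hence $\hat{R} - i\hat{J}$ is unitarily equivalent to $\hat{S} - i\hat{J}$, and one is positive if and only if the other is.

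Next I would handle condition (2). Because $M^T M = I$, I can write $R - I = M^T S M - M^T M = M^T(S - I)M$, and symmetrically $S - I = M(R - I)M^T$ since $MM^T = I$ as well. Since Hilbert--Schmidt operators form a two-sided ideal in $B(\mathcal{H}\oplus\mathcal{H})$ and $M$, $M^T$ are bounded, $S - I$ is Hilbert--Schmidt if and only if $R - I$ is.

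For condition (3) I would invoke Remark \ref{remarkcond3} and show $(JR)^2 + I$ is trace class iff $(JS)^2 + I$ is. The key observation is that $M$ commutes with $J$: from $M^T J M = J$ and $M^T = M^{-1}$ one gets $JM = MJ$, and transposing gives $M^T J = J M^T$. Then
\begin{equation*}
JR \;=\; J M^T S M \;=\; M^T J S M,
\end{equation*}
and using $MM^T = I$,
\begin{equation*}
(JR)^2 \;=\; M^T J S M \cdot M^T J S M \;=\; M^T (JS)^2 M,
\end{equation*}
so that $(JR)^2 + I = M^T\bigl((JS)^2 + I\bigr)M$. Since the trace class is also a two-sided ideal and $M$ has bounded inverse $M^T$, the trace-class property transfers in both directions. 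The main obstacle, though a mild one, is keeping track of where symplecticity versus orthogonality is used; in particular the joint assumption is what produces $JM = MJ$, without which the rearrangement for condition (3) would not go through cleanly.
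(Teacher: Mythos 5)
Your proof is correct and follows essentially the same route as the paper's: verify each of the three GCO conditions by conjugating through $M$, using symplecticity for the $\hat J$-part and orthogonality (via $M^T M = MM^T = I$) to convert $M^T(\,\cdot\,)M$ into a similarity that preserves the Hilbert--Schmidt and trace-class ideals. The only cosmetic difference is that you establish the reverse implication condition-by-condition (noting unitarity of $\hat M$ and the ideal property in both directions), whereas the paper proves one direction and then invokes the symmetry $S = (M^{-1})^T R\, M^{-1}$ with $M^{-1}$ again symplectic and orthogonal.
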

	
	\begin{proof}
		Given that the operators $S$ and $R$ are such that $R = M^TSM$ for some symplectic and orthogonal operator $M$ on $\mathcal{H} \oplus \mathcal{H}$.
		Suppose $S$ is a GCO. We claim that $R$ is a GCO, that is $R$ satisfies the three conditions in Definition \ref{defgco}.
		\begin{enumerate}
			\item To show $\hat{R} - i \hat{J} \geq 0$:
			\begin{align*}
				\hat{R} - i \hat{J} &= \widehat{M^TSM} - i \widehat{M^TJM} \\
				&= \widehat{M^T}\hat{S}\hat{M} - i \widehat{M^T}\hat{J}\hat{M} \quad (\textrm{follows from Remark \ref{remarkcomplexificationprop}})\\
				&= \widehat{M^T}(\hat{S} - i \hat{J})\hat{M}.
			\end{align*}
			Since $(\widehat{M^T})^* = ((\hat{M})^*)^* = \hat{M}$ (from Remark \ref{remarkcomplexificationprop}) and $\hat{S} - i\hat{J} \geq 0$, we have 
			$\hat{R} - i\hat{J} = \widehat{M^T}(\hat{S} - i\hat{J})\hat{M} \geq 0.$
			
			\item To show $R - I$ is Hilbert-Schmidt:
			\begin{align*}
				R - I &= M^TSM - I \\
				&= M^TSM - M^TM \\
				&= M^T(S - I)M.
			\end{align*}
			Since $S - I$ is Hilbert-Schmidt and they form a two sided ideal, we have the operator $R - I$ to be Hilbert-Schmidt.
			
			\item To show $(JR)^2 + I$ is of trace class.
			\begin{align*}
				(JR)^2 + I &= (JM^TSM)^2 + I \\
				&= (JM^TSM)(JM^TSM) + I \\
				&= JM^TS(MJM^T)SM + I\\
				&= JM^TSJSM + I 
			\end{align*} Since $M$ is symplectic and orthogonal, we have
			$MJM^T = J \Rightarrow JM^T = M^{-1}J = M^TJ.$ Hence we have 
			\begin{align*}
				(JR)^2 + I &= M^TJSJSM + I\\
				&= M^T(JS)^2M + I \\
				&= M^T(JS)^2M + M^TM \\
				&= M^T((JS)^2 + I)M.
			\end{align*} Since $(JS)^2 + I$ is of trace class and they form a two-sided ideal, we have the operator $(JR)^2 + I$ to be trace class.
		\end{enumerate}  Hence $R$ is a GCO when $S$ is a GCO. \\ The reverse implication can be obtained as follows. Since $R = M^TSM$, we have $S = (M^T)^{-1}RM^{-1} =  (M^{-1})^TRM^{-1}$. From Remark 3 \cite{john2018infinite}, $M^{-1}$ is a symplectic operator. Also since $M$ is orthogonal, so is $M^{-1}$. Put $L = M^{-1}$. Then $S = L^TRL$ for some symplectic and orthogonal operator $L$ on $\mathcal{H} \oplus \mathcal{H}$. Hence proceeding as above, we have $S$ is a GCO when $R$ is a GCO.
	\end{proof}
	
	Now we derive the conditions under which an operator in Class $\mathbb{B}$ becomes a GCO.
	
	\begin{theorem} \label{thmgcoinclassB}
		Let $S$ be an operator in Class $\mathbb{B}$. Then $S$ is a GCO if and only if
		\begin{enumerate}
			\item $\widehat{A+B} \geq \widehat{A-B}^{-1}$ on $\hat{\mathcal{H}}$,
			\item $A + B - I$ and $A - B - I$ are Hilbert-Schmidt operators on $\mathcal{H}$,
			\item $A^2 - B^2 - I$ is a trace class operator on $\mathcal{H}$.
		\end{enumerate}
	\end{theorem}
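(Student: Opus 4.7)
The plan is to reduce Theorem \ref{thmgcoinclassB} to Theorem \ref{thmgcoinclassA} by exploiting the symplectic--orthogonal conjugation already constructed in the proof of Theorem \ref{rstABBA}. That proof, together with Remark \ref{remarkonL}, exhibits
\[
L \;=\; \frac{1}{\sqrt{2}}\begin{bmatrix} I & -I \\ I & I \end{bmatrix}
\]
as a simultaneously symplectic and orthogonal operator on $\mathcal{H} \oplus \mathcal{H}$ satisfying $L^T S L = S'$, where
\[
S' \;=\; \begin{bmatrix} A+B & 0 \\ 0 & A-B \end{bmatrix}.
\]
Since $A+B$ and $A-B$ are by hypothesis positive invertible and commuting, $S'$ lies in Class $\mathbb{A}$, with $A+B$ and $A-B$ playing the roles of the two diagonal blocks.

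Next I would invoke Lemma \ref{lemmaconditionongco}, which says precisely that conjugation by a symplectic--orthogonal operator preserves the GCO property. Therefore $S$ is a GCO if and only if $S'$ is a GCO, and the problem is reduced to applying Theorem \ref{thmgcoinclassA} to $S'$. Its three conditions translate directly into: $\widehat{A+B} \geq \widehat{A-B}^{-1}$ on $\hat{\mathcal{H}}$; the Hilbert--Schmidt requirement on $(A+B)-I$ and $(A-B)-I$; and the trace-class requirement on $(A+B)(A-B) - I$.

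The first two match the statement verbatim. To bring the third into the stated form $A^2 - B^2 - I$, I would use the observation that the commutativity of $A+B$ with $A-B$ forces $AB = BA$: expanding $(A+B)(A-B) = (A-B)(A+B)$ and cancelling common terms gives $BA - AB = AB - BA$, hence $AB = BA$. Consequently $(A+B)(A-B) = A^2 - B^2$, and condition (3) of Theorem \ref{thmgcoinclassA} becomes exactly the stated trace-class requirement on $A^2 - B^2 - I$.

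No step is genuinely difficult; the argument is essentially a bookkeeping reduction via the orthogonal--symplectic change of basis. The one small point worth highlighting is the derivation $AB = BA$, which is not listed among the hypotheses but falls out of $[A+B,\, A-B] = 0$ and is what allows the third trace-class condition to collapse to the clean form $A^2 - B^2 - I$. With these observations in place, both directions of the biconditional follow simultaneously from the Class $\mathbb{A}$ equivalence.
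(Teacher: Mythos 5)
Your proposal is correct and follows essentially the same route as the paper: conjugate by the symplectic--orthogonal $L$ to land in Class $\mathbb{A}$, invoke Lemma \ref{lemmaconditionongco} to transfer the GCO property, and apply Theorem \ref{thmgcoinclassA}. The one place you are slightly more careful than the paper is in explicitly deriving $AB=BA$ from $[A+B,\,A-B]=0$ before collapsing $(A+B)(A-B)$ to $A^2-B^2$; the paper states this commutativity without derivation.
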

	
	\begin{proof}
		Given that the operator $S$ is in Class $\mathbb{B}$. Then $S$ takes the form 
		$
		S = \begin{bmatrix}
			A & B \\
			B & A
		\end{bmatrix},$ where $A$ and $B$ are self-adjoint operators on $\mathcal{H}$ such that the operators $A + B$ and $A - B$ are positive invertible and commuting. We have already seen that any $S$ in Class $\mathbb{B}$ is symplectically and orthogonally equivalent to $S^\prime = \begin{bmatrix}
			A + B & 0 \\
			0 & A - B
		\end{bmatrix}$. Hence from Lemma \ref{lemmaconditionongco}, $S$ is a GCO if and only if $S^\prime$ is a GCO. We have already derived conditions for the operator $S^\prime$ to be a GCO. Therefore we have the following. $S$ is a GCO if and only if
		\begin{enumerate}
			\item $\widehat{A + B} \geq (\widehat{A - B})^{-1}$ on $\hat{\mathcal{H}}$,
			\item The operators $A + B - I$ and $A - B - I$ are Hilbert-Schmidt operators on $\mathcal{H}$,
			\item $(A + B)(A - B) - I = A^2 - B^2 - I$ is a trace class operators on $\mathcal{H}$ (note that $(A + B)(A - B) = A^2 - B^2$ since $A$ and $B$ commutes).
		\end{enumerate}
	\end{proof}

	\subsection{Truncation method on GCOs} \label{subsectruncGCO}
	In this section, we show that the symplectic spectrum of a GCO can be completely retrieved using the truncation method described in \cite{bcn01}. First we derive some prerequisites needed to prove the result.
	\\ Let $\mathcal{M}$ be a complex separable Hilbert space, $A \in B(\mathcal{M})$ be a self-adjoint operator. Define  $$m = \underset{\|x \| = 1}{\textrm{ inf }} \langle Ax, x  \rangle; \quad \text{and} \quad M = \underset{\|x \| = 1}{\textrm{ sup }} \langle Ax, x  \rangle.$$ 
	Also let $\nu$ and $\mu$, respectively denote the minimum and maximum of the essential spectrum of $A$. The eigenvalues lying outside $[\nu, \mu]$ is at most countable, that is there are at most countable eigenvalues in the intervals $[m, \nu)$ and $(\mu, M]$ (See \cite{gohberg} for details). Therefore, the eigenvalues lying in the interval $[m, \nu)$ can be arranged in increasing order, and if they are countably infinite, they converge to $\nu$. Similarly, the eigenvalues in the interval $(\mu, M]$ can be arranged in decreasing order, and if they are countably infinite, they converge to $\mu$. Denote the eigenvalues in the interval $[m, \nu)$ as $$\lambda_1^-(A) \leq \lambda_2^-(A) \leq \cdots \leq \lambda_s^-(A)$$ and the eigenvalues in the interval $(\mu, M]$ as $$\lambda_R^+(A) \leq \cdots \leq \lambda_2^+(A) \leq \lambda_1^+(A)$$ (counting multiplicities) where $S,R \in \{1,2,\cdots \} \cup \{ \infty \}$. Since $\mathcal{M}$ is separable, it has a countable orthonormal basis, say, $\{e_1, e_2, \cdots \}$. For $n = 1,2, \cdots$, define $P_n$ to be the orthogonal projection of $\mathcal{M}$ onto the span of $\{ e_1, e_2, \cdots , e_n\}$. Put $A_n = P_nAP_n$, that is $A_n$ can be considered as the truncation of $A$ and can be identified with an $n \times n$ matrix. Denote the eigenvalues of $A_n$ (counting multiplicities) in non-increasing order by $$\lambda_1(A_n) \geq \lambda_2(A_n) \geq \cdots \geq \lambda_n(A_n).$$ A well-known result in the Galerkin method of spectral approximation \cite[Theorem 3.1]{bcn01} will serve as an important tool in our proof techniques. A formal statement of this result is provided below. 

	\begin{theorem}\cite[Theorem 3.1]{bcn01} \label{mnnresult}
		Let $A \in B(\mathcal{M})$ be self-adjoint. Then $$\underset{n \rightarrow \infty}{\lim} \lambda_k(A_n) = \begin{cases}
			{\lambda_k^+}(A), \textrm{ if } R = \infty, \textrm{ or } 1 \leq k \leq R \textrm{ when $ R <\infty$}, \\
			\mu, \textrm{ if } R < \infty \textrm{ and } k \geq R + 1,
		\end{cases}$$
		$$\underset{n \rightarrow \infty}{\lim} \lambda_{n+1-k}(A_n) = \begin{cases}
			{\lambda_k^-}(A),  \textrm{ if } S = \infty, \textrm{ or } 1 \leq k \leq S \textrm{ when $S < \infty$}, \\
			\nu, \textrm{ if } S < \infty \textrm{ and } k \geq S + 1,
		\end{cases}$$ In particular, $\underset{n \rightarrow \infty}{\lim} \lambda_k(A_n) \geq \mu \geq \nu \geq \underset{n \rightarrow \infty}{\lim} \lambda_{n+1-k}(A_n), \quad \forall k \geq 1,$ and 
		$\underset{k \rightarrow \infty}{\lim} \underset{n \rightarrow \infty}{\lim} \lambda_k(A_n) = \mu, \quad \underset{k \rightarrow \infty}{\lim} \underset{n \rightarrow \infty}{\lim} \lambda_{n+1-k}(A_n) = \nu.$
	\end{theorem}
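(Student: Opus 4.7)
The plan is to follow the classical Rayleigh--Ritz / Galerkin approach based on the Courant--Fischer min-max principle. Setting $\mathcal{M}_n = \mathrm{span}\{e_1,\dots,e_n\}$, min-max expresses
$\lambda_k(A_n) = \max_{V \subset \mathcal{M}_n,\, \dim V = k}\,\min_{x \in V,\, \|x\|=1}\langle Ax,x\rangle$, and since $\mathcal{M}_n \subset \mathcal{M}_{n+1}$ the sequence $\lambda_k(A_n)$ is non-decreasing in $n$ and bounded above by $M$; hence the limit $\alpha_k := \lim_n \lambda_k(A_n)$ exists in $[m,M]$. The lower bound is the routine direction: for $k \leq R$ I would approximate true eigenvectors $v_1,\dots,v_k$ of $A$ (with eigenvalues $\lambda_1^+(A),\dots,\lambda_k^+(A)$) by their projections $P_n v_j \to v_j$, yielding a $k$-dimensional test subspace $V_n \subset \mathcal{M}_n$ on which the Rayleigh quotient tends to $\lambda_k^+(A)$, so $\alpha_k \geq \lambda_k^+(A)$; and by Weyl's criterion at the top of the essential spectrum one similarly produces $k$ orthonormal approximate eigenvectors at $\mu$, giving $\alpha_k \geq \mu$ for every $k$.

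The main obstacle is the matching upper bound. The key step is a Weyl-type inequality: for each $t > \mu$, letting $N(t) := \dim \mathrm{Ran}\,E_A((t,\infty))$, which is finite because eigenvalues above $\mu$ are isolated with finite multiplicity and can only accumulate at $\mu$, one has $\lambda_{N(t)+1}(A_n) \leq t$ for every $n$. To establish this I would use a dimension count: any $(N(t)+1)$-dimensional subspace $V \subset \mathcal{M}_n$ must intersect $\mathrm{Ran}\,E_A((-\infty,t])$ (of codimension $N(t)$ in $\mathcal{M}$) nontrivially, producing a unit vector $x$ with $\langle Ax,x\rangle \leq t$, so by min-max $\lambda_{N(t)+1}(A_n) \leq t$. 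Granting this, if $\alpha_k > t$ then $k \leq N(t)$; letting $t \searrow \mu$ pins down $\alpha_k \leq \mu$ whenever $k > R$, and for $k \leq R$ a refined version applied within $\mathrm{Ran}\,E_A((\lambda_k^+(A) + \varepsilon, \infty))$ (of dimension less than $k$) gives $\alpha_k \leq \lambda_k^+(A) + \varepsilon$, hence $\alpha_k \leq \lambda_k^+(A)$ upon $\varepsilon \downarrow 0$.

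The symmetric statements for $\lambda_{n+1-k}(A_n)$ follow at once by applying everything above to $-A$, using $(-A)_n = -A_n$ and $\lambda_k(-A_n) = -\lambda_{n+1-k}(A_n)$. The two concluding ``in particular'' assertions are then direct consequences: the first combines the two convergence formulas with $\lambda_k^+(A) \geq \mu \geq \nu \geq \lambda_k^-(A)$, and the second uses that $\{\lambda_k^+(A)\}$ is a non-increasing sequence of isolated eigenvalues whose only possible accumulation point is $\mu$ (with the analogous statement for $\lambda_k^-(A)$ at $\nu$), giving $\lim_k \lim_n \lambda_k(A_n) = \mu$ when $R = \infty$ and following trivially from the stabilization $\alpha_k = \mu$ when $R < \infty$.
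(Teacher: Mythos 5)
The paper does not prove this statement: it is quoted verbatim as \cite[Theorem 3.1]{bcn01} and used as a black-box tool, so there is no ``paper's own proof'' to compare against. Your proposal supplies a correct and complete argument along the standard Rayleigh--Ritz/Galerkin lines that the cited source itself uses: monotonicity of $\lambda_k(A_n)$ in $n$ from Courant--Fischer (using that $\langle A_n x, x\rangle = \langle Ax, x\rangle$ on $\mathcal{M}_n$ and $\mathcal{M}_n \subset \mathcal{M}_{n+1}$), the easy lower bounds via projected true eigenvectors and Weyl singular sequences at $\mu$, and the matching upper bound via the dimension-count inequality $\lambda_{N(t)+1}(A_n) \leq t$ obtained by intersecting a $(N(t)+1)$-dimensional test space with $\mathrm{Ran}\,E_A((-\infty,t])$. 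The reduction of the bottom-eigenvalue assertions to the top ones by passing to $-A$, and the deduction of the two ``in particular'' statements (using that eigenvalues above $\mu$ can only accumulate at $\mu$), are also correct.
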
  
	
	The next theorem shows that the symplectic spectrum of a GCO can be completely retrieved using Theorem \ref{mnnresult}. Note that the spectrum of a GCO is atmost countable and its essential spectrum is $\{-1,1\}$ (see \cite{john2022interlacing} for details). Also from Lemma 3.3.6\cite{john2018infinite}, the symplectic spectrum of a GCO lies above $1$. 
	\begin{theorem} \label{thmgcomainresult}
		Let $S$ be a Gaussian Covariance Operator on $\mathcal{H} \oplus \mathcal{H}$. Denote the symplectic eigenvalues of $S$ in non-increasing order by  
		$d_1(S) \geq d_2(S) \geq \cdots \geq d_R(S)$ (counting multiplicities) where $R \in \{1,2, \cdots \} \cup \{ \infty\}$. Then
		\begin{equation*} \label{equationGCOtrunc}
			\underset{n \rightarrow \infty}{\lim} d_k(S_{2n}) = \begin{cases}
				{d_k}(S), \textrm{ if } R = \infty, \textrm{ or } 1 \leq k \leq R \textrm{ when $ R <\infty$}, \\
				1, \textrm{ if } R < \infty \textrm{ and } k \geq R + 1,
			\end{cases}
		\end{equation*} where 
		$d_1(S_{2n}) \geq d_2(S_{2n}) \geq \cdots \geq d_n(S_{2n})$ are the symplectic eigenvalues of $S_{2n}$, $n = 1,2, \cdots$ (arranged in non-increasing order).
	\end{theorem}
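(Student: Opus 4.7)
The plan is to reduce the claim to Theorem \ref{mnnresult} by introducing an auxiliary self-adjoint operator whose positive spectrum encodes the symplectic eigenvalues of $S$, and then controlling how its spatial truncation differs from $\hat{A}_{2n}$. On the complexification $\widehat{\mathcal{H} \oplus \mathcal{H}}$ I would define
\[
A := i\,\hat{S}^{1/2}\,\hat{J}\,\hat{S}^{1/2}.
\]
Since $i\hat{J}$ is self-adjoint (as $\hat{J}$ is anti-Hermitian with $\hat{J}^{2}=-\hat{I}$) and $\hat{S}^{1/2}$ is positive self-adjoint, $A$ is self-adjoint; conjugation by the bounded invertible $\hat{S}^{1/2}$ gives $\sigma(A)=\sigma(i\hat{J}\hat{S})$. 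Transporting through Williamson's normal form (Theorem \ref{wnfifd}) together with the standard finite-dimensional identification $\sigma(JS)=\{\pm i d : d\in\sigma_{sy}(S)\}$, the positive eigenvalues of $A$ (with multiplicities) coincide with $\sigma_{sy}(S)$. Since $S$ is a GCO, $S-I$ is Hilbert-Schmidt so $\hat{S}-\hat{I}$ is compact, and by the holomorphic functional calculus applied to $z\mapsto z^{1/2}$ on a neighbourhood of $\sigma(\hat{S})\subset(0,\infty)$ so is $\hat{S}^{1/2}-\hat{I}$. Writing
\[
A - i\hat{J} = (\hat{S}^{1/2}-\hat{I})\,i\hat{J}\,\hat{S}^{1/2} + i\hat{J}\,(\hat{S}^{1/2}-\hat{I}),
\]
one sees that $A$ is a compact perturbation of $i\hat{J}$, and Weyl's theorem yields $\sigma_{ess}(A)=\sigma_{ess}(i\hat{J})=\{-1,1\}$. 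Because $\sigma_{sy}(S)\subset[1,\infty)$ for a GCO, the eigenvalues of $A$ strictly above $\mu:=\sup\sigma_{ess}(A)=1$ are exactly $d_1(S)\geq d_2(S)\geq\cdots$, comprising $R$ values with multiplicities.

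For the finite truncations, define $\hat{A}_{2n}:=i\,(\hat{S}_{2n})^{1/2}\,\hat{J}_{2n}\,(\hat{S}_{2n})^{1/2}$; the finite-dimensional Williamson form (Theorem \ref{wnffd}) gives that the $n$ largest eigenvalues of $\hat{A}_{2n}$ in non-increasing order are $d_1(S_{2n})\geq\cdots\geq d_n(S_{2n})$. Because $P_{2n}$ commutes with $\hat{J}$ (the chosen basis respects the $\mathcal{H}\oplus\mathcal{H}$ block structure of $J$), the cross terms in the expansion of $P_{2n}\hat{S}^{1/2}\hat{J}\hat{S}^{1/2}P_{2n}$ vanish and one obtains
\[
P_{2n}AP_{2n} = i\,(P_{2n}\hat{S}^{1/2}P_{2n})\,\hat{J}\,(P_{2n}\hat{S}^{1/2}P_{2n}),
\]
so that $\|\hat{A}_{2n}-P_{2n}AP_{2n}\|$ is controlled by $\|(\hat{S}_{2n})^{1/2}-P_{2n}\hat{S}^{1/2}P_{2n}\|$ times a uniform bound on $\|\hat{S}^{1/2}\|$.

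The crux is establishing that this norm difference tends to $0$. I would obtain it via a Cauchy integral representation of $z\mapsto z^{1/2}$ over a contour $\Gamma$ enclosing $\sigma(\hat{S})\cup\sigma(\hat{S}_{2n})$, since both $(\hat{S}_{2n})^{1/2}$ and $P_{2n}\hat{S}^{1/2}P_{2n}$ admit representations of the form $\tfrac{1}{2\pi i}\oint_\Gamma z^{1/2}(\,\cdot\,)\,dz$ in which the resolvent-type integrands differ by pieces carrying factors $P_{2n}^{\perp}(\hat{S}-\hat{I})$ (and its adjoint), all of which tend to $0$ in norm by compactness of $\hat{S}-\hat{I}$ and strong convergence $P_{2n}^{\perp}\to 0$. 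Granted this estimate, Weyl's inequality gives $|\lambda_k(\hat{A}_{2n})-\lambda_k(P_{2n}AP_{2n})|\to 0$ for each fixed $k$, and applying Theorem \ref{mnnresult} to $A$ along the sequence $(2n)$ yields $\lim_n d_k(S_{2n})=\lim_n\lambda_k(\hat{A}_{2n})=d_k(S)$ for $1\leq k\leq R$ (all $k$ if $R=\infty$), and $\lim_n d_k(S_{2n})=\mu=1$ for $k\geq R+1$ when $R<\infty$, which is the stated conclusion. I expect the main obstacle to be precisely this operator-norm estimate for the square root: $(\hat{S}_{2n})^{1/2}\neq P_{2n}\hat{S}^{1/2}P_{2n}$ in general, and only the compactness of $\hat{S}-\hat{I}$ built into the GCO definition makes their difference vanish in norm as $n\to\infty$.
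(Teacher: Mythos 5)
Your overall route is the same as the paper's: introduce the self-adjoint operator $i\hat{S}^{1/2}\hat{J}\hat{S}^{1/2}$ on the complexification, note its positive eigenvalues are the symplectic eigenvalues of $S$, and feed it into Theorem \ref{mnnresult}. What you add, correctly, is an explicit Weyl-theorem verification that $\sigma_{ess}(i\hat{S}^{1/2}\hat{J}\hat{S}^{1/2})=\{-1,1\}$ from compactness of $\hat{S}^{1/2}-\hat{I}$, and you explicitly flag and try to close the identification of $d_k(S_{2n})$ (the positive eigenvalues of $i\sqrt{S_{2n}}\,J_{2n}\sqrt{S_{2n}}$) with $\lambda_k^{+}\bigl(P_{2n}(i\hat{S}^{1/2}\hat{J}\hat{S}^{1/2})P_{2n}\bigr)$ — a step the paper treats as immediate but which genuinely requires an argument, since the two truncations are different operators.

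However, there is a real error in your bridging step. The claimed identity
\[
P_{2n}\,\hat{S}^{1/2}\hat{J}\hat{S}^{1/2}\,P_{2n}
= \bigl(P_{2n}\hat{S}^{1/2}P_{2n}\bigr)\hat{J}\bigl(P_{2n}\hat{S}^{1/2}P_{2n}\bigr)
\]
is false. Using $P_{2n}\hat{J}=\hat{J}P_{2n}$ and $P_{2n}^2=P_{2n}$, the right-hand side equals $P_{2n}\hat{S}^{1/2}\hat{J}P_{2n}\hat{S}^{1/2}P_{2n}$, so the discrepancy with the left-hand side is
\[
P_{2n}\,\hat{S}^{1/2}\hat{J}\,(I-P_{2n})\,\hat{S}^{1/2}\,P_{2n},
\]
which is generically nonzero because $P_{2n}$ does not commute with $\hat{S}^{1/2}$. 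The ``cross terms'' you discard are exactly this expression. The conclusion you need can still be reached, but only asymptotically: writing $\hat{S}^{1/2}=I+L$ with $L$ compact (a consequence of $S-I$ being Hilbert--Schmidt and the holomorphic functional calculus, as you correctly observe elsewhere), the error term simplifies — since $P_{2n}\hat{J}(I-P_{2n})=0$, $(I-P_{2n})P_{2n}=0$, and $P_{2n}\hat{J}(I-P_{2n})L P_{2n}=0$ — to $P_{2n}L\hat{J}(I-P_{2n})L P_{2n}$, whose norm tends to $0$ because $(I-P_{2n})L\to 0$ in norm by compactness of $L$. So what you actually get is a norm-vanishing error, not an exact identity. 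With that repair, combined with your (correct) observation that $\|(\hat{S}_{2n})^{1/2}-P_{2n}\hat{S}^{1/2}P_{2n}\|\to 0$, Weyl's perturbation inequality then lets you transfer the eigenvalue limits from Theorem \ref{mnnresult} applied to $A$ over to $d_k(S_{2n})$. You should also be aware that the paper's own write-up does not spell out this bridging step; it passes from $\lambda_k$ of the truncation of $i\hat{S}^{1/2}\hat{J}\hat{S}^{1/2}$ directly to $d_k(S_{2n})$ without comment, which is exactly the point your argument is attempting (and, modulo the correction above, can be made) to justify rigorously.
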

	
	\begin{proof}
		Let $\{e_1, e_2, \cdots \}$ be an orthonormal basis for $\mathcal{H}$. Then $\{(e_1,0), (e_2,0), \cdots \} \cup \{(0,e_1), (0,e_2), \cdots \}$ will be an orthonormal basis for $\mathcal{H} \oplus \mathcal{H}$. Denote the complexification of $\mathcal{H} \oplus \mathcal{H}$ by $\widehat{\mathcal{H} \oplus \mathcal{H}}$, that is $$\widehat{\mathcal{H} \oplus \mathcal{H}} = (\mathcal{H} \oplus \mathcal{H}) + i (\mathcal{H} \oplus \mathcal{H}).$$ Let $\{E_1, E_2, \cdots \}$ be an orthonormal basis for $\widehat{\mathcal{H} \oplus \mathcal{H}}$, where $$E_{2k-1}= (e_k,0) \textrm{ and } E_{2k} = (0,e_k),  \quad k = 1,2, \cdots.$$ For $n \in \mathbb{N}$, define the space $\widehat{(\mathcal{H} \oplus \mathcal{H})_{2n}}$ as follows.
		$$\widehat{(\mathcal{H} \oplus \mathcal{H})_{2n}} = \textrm{ span }\{E_1, E_2, \cdots, E_{2n}\}.$$ Let $P_{2n}$ denote the orthogonal projection of the Hilbert space $\widehat{\mathcal{H} \oplus \mathcal{H}}$ onto the subspace $\widehat{(\mathcal{H} \oplus \mathcal{H})_{2n}}$. Put $\widehat{i\sqrt{S}J\sqrt{S}}_{2n} = P_{2n}(\widehat{i\sqrt{S}J\sqrt{S}})P_{2n}$. Then $\widehat{i\sqrt{S}J\sqrt{S}}_{2n}$ can be identified with a matrix of order $2n$. Denote the eigenvalues of $\widehat{i\sqrt{S}J\sqrt{S}}_{2n}$ by 
		$$\lambda_1(\widehat{i\sqrt{S}J\sqrt{S}}_{2n}) \geq \lambda_2(\widehat{i\sqrt{S}J\sqrt{S}}_{2n}) \geq \cdots \geq \lambda_{2n}(\widehat{i\sqrt{S}J\sqrt{S}}_{2n}).$$ 		
		Consider the eigenvalues of $\widehat{i\sqrt{S}J\sqrt{S}}$ to the right of $1$ denoted by 
		$$\lambda_R^+(\widehat{i\sqrt{S}J\sqrt{S}}) \leq \cdots \leq \lambda_2^+(\widehat{i\sqrt{S}J\sqrt{S}}) \leq \lambda_1^+(\widehat{i\sqrt{S}J\sqrt{S}})$$
		(counting multiplicities) where $R \in \{1,2, \cdots\} \cup \{\infty\}$. Now from Theorem \ref{mnnresult}, we have 
		$$\underset{n \rightarrow \infty}{\lim} \lambda_k(\widehat{i\sqrt{S}J\sqrt{S}}_{2n}) = \begin{cases}
			{\lambda_k^+}(\widehat{i\sqrt{S}J\sqrt{S}}), \textrm{ if } R = \infty, \textrm{ or } 1 \leq k \leq R \textrm{ when $ R <\infty$}, \\
			1, \textrm{ if } R < \infty \textrm{ and } k \geq R + 1,
		\end{cases}$$
		But the spectrum of an operator and its complexification are the same. Hence the above relation implies
		\begin{equation} \label{equationtruncation}
			\underset{n \rightarrow \infty}{\lim} \lambda_k(i\sqrt{S}J\sqrt{S}_{2n}) = \begin{cases}
				{\lambda_k^+}(i\sqrt{S}J\sqrt{S}), \textrm{ if } R = \infty, \textrm{ or } 1 \leq k \leq R \textrm{ when $ R <\infty$}, \\
				1, \textrm{ if } R < \infty \textrm{ and } k \geq R + 1,
			\end{cases}
		\end{equation}
		
		It is known that  the symplectic eigenvalues of an operator $Q$ are the positive eigenvalues of the self-adjoint operator $i\sqrt{Q}J\sqrt{Q}$ (see  \cite[Corollary 3.3 and Theorem 4.3]{bv01}). The same holds for matrices as well (see \cite{rbt01}). Also from Lemma 3.3.6\cite{john2018infinite}, the symplectic spectrum of a GCO lies above $1$. Hence Equation \ref{equationtruncation} actually exhibits a method to completely retrieve the symplectic eigenvalues of $S$ using its finite dimensional truncations and we have 
		$$
		\underset{n \rightarrow \infty}{\lim} d_k(S_{2n}) = \begin{cases}
			{d_k}(S), \textrm{ if } R = \infty, \textrm{ or } 1 \leq k \leq R \textrm{ when $ R <\infty$}, \\
			1, \textrm{ if } R < \infty \textrm{ and } k \geq R + 1.
		\end{cases}
		$$
	\end{proof}

	\section{Numerical Illustrations} \label{secnumillus}
	
	In this section, we will illustrate numerical examples where the truncation method proves effective. All computations are performed using the Python program we have developed.
	
	\begin{example} \label{exptrunwins}
		Let $\mathcal{H} = l^2$. Define $A : l^2 \rightarrow l^2$ to be the Toeplitz operators corresponding to the symbol $2 + cos(t) \in L^\infty(\mathbb{T})$. Then $A$ takes the form 
		$$ A = 
		\begin{bmatrix}
			2 & \frac{1}{2} & 0 & 0 & \cdots \\
			\frac{1}{2} & 2 & \frac{1}{2} & 0 & \cdots \\
			0 & \frac{1}{2} & 2 & \frac{1}{2} & \cdots \\
			\cdots & \cdots & \cdots & \cdots & \cdots 
		\end{bmatrix}
		$$ Then $\sigma(A) = [1,3]$. Define $T : l^2 \oplus l^2 \rightarrow l^2 \oplus l^2$ as $T = \begin{bmatrix}
			A & 0 \\ 
			0 & A
		\end{bmatrix}$. Since $A$ is a Toeplitz operator, its essential spectrum coincides with the spectrum, that is $\sigma_{ess}(A)$ is connected. Hence from Theorem \ref{rstAA}, the method of truncation will work. We will now verify this using the algorithm we have developed (the notations are as in the proof of Theorem \ref{rstAA}). The table below gives the symplectic eigenvalues of the truncated operator $T_{2n}$ of $T$.

		\begin{table}[htb] \label{numericalexample1}
			\begin{tabular}{|p{0.05\linewidth} | p{0.9\linewidth}|} 
				\hline
				$n$  & Symplectic Eigenvalues (upto 5 decimal places) \\
				\hline
				10 & {1.13397, 1.5, 2.0, 2.5, 2.86603} \\
				\hline
				20 & 1.04051, 1.15875, 1.34514, 1.58458, 1.85769, 2.14231, 2.41542, 2.65486, 2.84125, 2.95949 \\
				\hline 
				50 & 1.00729, 1.02906, 1.06498, 1.11454, 1.17702, 1.25149, 1.33688, 1.43194, 1.53528, 1.6454, 1.76068, 1.87946, 2.0, 2.12054, 2.23932, 2.3546, 2.46472, 2.56806, 2.66312, 2.74851, 2.82298, 2.88546, 2.93502, 2.97094, 2.99271 \\
				\hline 
				100 & 1.0019, 1.00758, 1.01703, 1.0302, 1.04706, 1.06753, 1.09153, 1.11899, 1.14978, 1.1838, 1.22092, 1.26099, $\cdots$ , 1.66765, 1.72634, 1.78607, 1.84661, 1.90773, 1.9692, 2.0308, 2.09227, 2.15339, 2.21393, 2.27366, 2.33235, $\cdots$, 2.77908, 2.8162, 2.85022, 2.88101, 2.90847, 2.93247, 2.95294, 2.9698, 2.98297, 2.99242, 2.9981 \\ \hline
				$\cdots$ & $\cdots$ \\
				\hline 
				1000 & 1.00002, 1.00008, 1.00018, 1.00031, 1.00049, 1.00071, 1.00096, 1.00126, 1.00159, 1.00197, 1.00238, 1.00283, $\cdots$, 1.96552, 1.97179, 1.97805, 1.98432, 1.99059, 1.99686, 2.00314, 2.00941, 2.01568, 2.02195, 2.02821, 2.03448, $\cdots$, 2.99668, 2.99717, 2.99762, 2.99803, 2.99841, 2.99874, 2.99904, 2.99929, 2.99951, 2.99969, 2.99982, 2.99992, 2.99998  \\
				\hline 
				$\cdots$ & $\cdots$ \\
				\hline
			\end{tabular}
			\caption{Truncated Symplectic Eigenvalues for $T$ in Example \ref{exptrunwins}.}  
		\end{table}

		\noindent It can be observed from Table \ref{numericalexample1} that as $n$ increases, the symplectic eigenvalues tend to take more values in the interval $[1,3]$, which is the symplectic spectrum of $T$ by Theorem \ref{rstAA}.

	\end{example}
	
	\begin{example} {(GCO)}\label{numericalexgco}
		Consider the operator $T$ defined on $\mathcal{H} \oplus \mathcal{H}$ by
		$$
		T = \begin{bmatrix}
			A & B \\
			B & A
		\end{bmatrix},$$ where $A$ and $B$ are operators on $\mathcal{H}$ given by
		\begin{align*}
			A &= \textrm{ diag } \left\{ 1 + \frac{1}{2(n+1)^2} + \frac{1}{2(n+1)^3} : n \in \mathbb{N} \right\} \\
			B &= \textrm{ diag } \left\{ \frac{1}{2(n+1)^2} - \frac{1}{2(n+1)^3} : n \in \mathbb{N} \right\}. \\
		\end{align*}
		From Theorem \ref{thmgcoinclassB}, it can be verified that the operator $T$ is a GCO that comes in class $\mathbb{B}$. Then following the notations as in Theorem \ref{rstABBA}, the truncation of the operator $T$ is given by
		$$
		T_{2n} = \begin{bmatrix}
			A_n & B_n \\
			B_n & A_n
		\end{bmatrix}.
		$$
		The symplectic eigenvalues of the truncated matrix $T_{2n}$ is given in Table \ref{numericalexgco}.
		
		\newpage
		\begin{table}[htb]
			\begin{tabular}{|p{0.05\linewidth} | p{0.9\linewidth}|} 
				\hline
				$n$  & Symplectic Eigenvalues (upto 10 decimal places)\\
				\hline
				10 & 1.1858541226, 1.0734353145, 1.0387981337, 1.0238749924, 1.01613779 \\
				\hline
				20 &  1.1858541226, 1.0734353145, 1.0387981337, 1.0238749924, 1.01613779, 1.0116239984, 1.0087659008, 1.0068437596, 1.0054899303, 1.0045008645 \\
				\hline 
				50 & 1.1858541226, 1.0734353145, 1.0387981337, 1.0238749924, 1.01613779, 1.0116239984, 1.0087659008, 1.0068437596, 1.0054899303, 1.0045008645, 1.0037565277, 1.0031824456, 1.0027304382, 1.0023682246, 1.0020735224, 1.0018305513, 1.0016278834, 1.0014570789, 1.0013117958, 1.0011871944, 1.0010795294, 1.0009858661, 1.0009038788, 1.0008317053, 1.0007678401  \\
				\hline 
				100 & 1.1858541226, 1.0734353145, 1.0387981337, 1.0238749924, 1.01613779, 1.0116239984, 1.0087659008, 1.0068437596, 1.0054899303, 1.0045008645, 1.0037565277, 1.0031824456, 1.0027304382, 1.0023682246, 1.0020735224, 1.0018305513, 1.0016278834, 1.0014570789, 1.0013117958, 1.0011871944, 1.0010795294, 1.0009858661, 1.0009038788, 1.0008317053, 1.0007678401, 1.0007110557, 1.0006603431, 1.0006148667, 1.00057393, 1.0005369483, 1.0005034282, 1.000472951, 1.0004451592, 1.0004197465, 1.0003964489, 1.0003750381, 1.0003553157, 1.0003371088, 1.0003202661, 1.0003046546, 1.0002901572, 1.0002766703, 1.0002641023, 1.0002523714, 1.000241405, 1.0002311381, 1.0002215124, 1.0002124757, 1.0002039808, 1.0001959853 \\
				\hline
				$\cdots$ & $\cdots$ \\
				\hline 
				1000 & 1.1858541226, 1.0734353145, 1.0387981337, 1.0238749924, 1.01613779, 1.0116239984, 1.0087659008, 1.0068437596, 1.0054899303, 1.0045008645, 1.0037565277, 1.0031824456, 1.0027304382, 1.0023682246, 1.0020735224, 1.0018305513, 1.0016278834, 1.0014570789, 1.0013117958, 1.0011871944, 1.0010795294, 1.0009858661, 1.0009038788, 1.0008317053, 1.0007678401, 1.0007110557, 1.0006603431, 1.0006148667, 1.00057393, $\cdots$, 
				1.0000024205, 1.0000024099, 1.0000023993, 1.0000023888, 1.0000023784, 1.0000023681, 1.0000023578, 1.0000023476, 1.0000023375, 1.0000023274, 1.0000023174, 1.0000023074, 1.0000022975, 1.0000022877, 1.000002278, 1.0000022683, 1.0000022586, 1.0000022491, 1.0000022396, 1.0000022301, 1.0000022207, 1.0000022114, 1.0000022021, 1.0000021929, 1.0000021838, 1.0000021747, 1.0000021656, 1.0000021566, 1.0000021477, 1.0000021388, 1.00000213, 1.0000021212, 1.0000021125, 1.0000021039, 1.0000020953, 1.0000020867, 1.0000020782, 1.0000020698, 1.0000020614, 1.000002053, 1.0000020447, 1.0000020365, 1.0000020283, 1.0000020201, 1.000002012, 1.000002004, 1.000001996  \\
				\hline 
				$\cdots$ & $\cdots$ \\
				\hline
			\end{tabular}
			\caption{Truncated Symplectic Eigenvalues for $T$ in Example \ref{numericalexgco}.}  
		\end{table} 
		
		From Theorem \ref{thmgcomainresult}, we have $d_k(T) = \underset{n \rightarrow \infty}{\lim} d_k(T_{2n})$. Now by observing the values given in Table \ref{numericalexgco}, it can be seen that for each $k = 1,2, \cdots, n$ where $n \in \mathbb{N}$, $d_k(T_{2n})$ takes the form
		$
		\sqrt{\left( 1 + \frac{1}{(k+1)^2} \right)\left( 1 + \frac{1}{(k+1)^3} \right)}.$ 
		Hence from the numerical computations we have,
		$d_k(T) = \underset{n \rightarrow \infty}{\lim} d_k(T_{2n}) = \sqrt{\left( 1 + \frac{1}{(k+1)^2} \right)\left( 1 + \frac{1}{(k+1)^3} \right)}, \,\, k = 1,2, \cdots.$
		Now let us verify this numerical observation using Theorem \ref{rstABBA}. The given operator $T$ is symplectically equivalent to the operator 
		$$
		T^\prime = \begin{bmatrix}
			A + B & 0 \\
			0 & A - B
		\end{bmatrix},
		$$ where 
		$$
		A + B = \textrm{ diag }\left\{ 1 + \frac{1}{(n+1)^2}: n \in \mathbb{N} \right\},
		$$
		$$
		A - B = \textrm{ diag }\left\{ 1 + \frac{1}{(n+1)^3}: n \in \mathbb{N} \right\}.
		$$ Then from Theorem \ref{rstAB}, 
		$\sigma_{sy}(T) = \sigma((A + B)^{\frac{1}{2}}(A - B)^{\frac{1}{2}}) = \left\{ \sqrt{\left( 1 + \frac{1}{(n+1)^2} \right)\left( 1 + \frac{1}{(n+1)^3} \right)}: n \in \mathbb{N} \right\}.$
		Hence the theoretical value coincides with the values obtained through numerical computations.
	\end{example}

	\section{Concluding Remarks and Future Problems}
	
	Here we list the major achievements of this article and some future problems. One of our accomplishments is successfully determining the Williamson's normal form and establishing bounds for the symplectic spectrum for operators within a specific class. Additionally, we identified conditions under which the considered operators become a Gaussian Covariance Operator (GCO). Notably, we demonstrated that the symplectic spectrum of a GCO can be entirely obtained using its finite-dimensional truncations. To exemplify the truncation method, we developed an illustrative algorithm.
	
	The following are some of the future problems we intend to explore:
	\begin{enumerate}
		\item Firstly, we aim to investigate the Williamson's normal form and symplectic spectrum of integral operators and infinite matrices.
		
		\item We also wish to explore scenarios in which integral operators and infinite matrices become a Gaussian Covariance Operator.
	\end{enumerate}
	Through addressing these future problems, we seek to further expand our understanding of symplectic spectrum and their implications in both finite and infinite-dimensional contexts.
	
	\section*{Acknowledgements}
	V. B. Kiran Kumar thanks the SERB SURE Scheme for the financial support. Anmary Tonny is supported by the INSPIRE PhD Fellowship of the Department of Science and Technology, Govt of India. The authors wish to thank Dr. Tiju Cherian John, Research Scientist, University of Arizona for the fruitful discussions. 

	\nocite{*}
	\bibliography{jmp}
	
\end{document}